\newcommand{\card}[1]{\lvert #1\rvert} 
\newcommand{\vph}{\varphi}
\newcommand{\ZZ}{{\mathbb Z}}
\newcommand{\NN}{{\mathbb N}}
\newcommand{\mad}{{\textrm{mad}}}
\def\C{\mathcal{C}}
\newtheorem{theorem}{Theorem}[section]
	\newtheorem{lemma}[theorem]{Lemma}
	\newtheorem{conj}[theorem]{Conjecture}
	\newtheorem{defn}{Definition}
	\theoremstyle{definition}
\tikzset{
    position/.style args={#1:#2 from #3}{
        at=(#3.#1), anchor=#1+180, shift=(#1:#2)
    }
}
\tikzset{Simple Node/.style={fill=black, circle, inner sep=2pt}}
\pgfplotsset{compat=1.17}
\tikzstyle{uStyle}=[draw=black, fill=black, circle, thick, inner sep=1.2pt]
\def\aftermath{\par\vspace{-\belowdisplayskip}\vspace{-\parskip}\vspace{-\baselineskip}}
\title{The \texorpdfstring{{$t$}}{t}-Tone Chromatic Number\\ of
Classes of Sparse Graphs}
\author{Daniel W. Cranston \and Hudson LaFayette}
\author{Daniel W. Cranston\thanks{Department of Computer Science, Virginia Commonwealth University, Richmond, VA, USA; \texttt{dcranston@vcu.edu}} \and Hudson LaFayette\thanks{Department of Mathematics and Applied Mathematics, Virginia Commonwealth University, Richmond, VA, USA; \texttt{lafayettehl@vcu.edu}}}
\date{}
\begin{document}
\maketitle 

\begin{abstract}
For a graph \(G\) and \(t,k\in\ZZ^+\) a \emph{\(t\)-tone \(k\)-coloring} of
\(G\) is a function \(f:V(G)\rightarrow \binom{[k]}{t}\) such that
\(\card{f(v)\cap f(w)}<d(v,w)\) for all distinct \(v,w\in V(G)\).  
The \emph{\(t\)-tone chromatic number} of \(G\), denoted
\(\tau_t(G)\), is the minimum \(k\) such that \(G\) is \(t\)-tone \(k\)-colorable. 
For small values of $t$, we prove sharp or nearly sharp upper bounds on the
$t$-tone chromatic number of various classes of sparse graphs.
In particular, we determine $\tau_2(G)$ exactly when $\mad(G)<12/5$ and bound
$\tau_2(G)$, up to a small additive constant, when $G$ is outerplanar.
We also determine $\tau_t(C_n)$ exactly when $t\in\{3,4,5\}$.
\end{abstract}

\section{Introduction}
All of our graphs are finite and simple. We write \([k]\) to denote
\(\{1,\dots,k\}\) and write \(\binom{[k]}{t}\) to denote the collection of all 
subsets of \([k]\) of size \(t\); we refer to elements of \(\binom{[k]}{t}\) as
\emph{\(t\)-sets}. 
For a graph \(G\) and \(v,w\in V(G)\), we write \(d(v,w)\) for the distance (length of
the shortest path) between \(v\) and \(w\).

In 2009, Ping Zhang led N. Fonger, J. Goss, B.  Phillips, and C.
Segroves~\cite{fgps} in developing a new generalization of proper vertex
coloring. They called it \(t\)-tone coloring.

\begin{defn}
For a graph \(G\) and \(t,k\in\ZZ^+\) a \emph{\(t\)-tone \(k\)-coloring} of
\(G\) is a function \(f:V(G)\rightarrow \binom{[k]}{t}\) such that
\(\card{f(v)\cap f(w)}<d(v,w)\) for all distinct \(v,w\in V(G)\).  
A graph that has a \(t\)-tone \(k\)-coloring is \emph{\(t\)-tone
\(k\)-colorable}, and the \emph{\(t\)-tone chromatic number} of \(G\), denoted
\(\tau_t(G)\), is the minimum \(k\) such that \(G\) is \(t\)-tone \(k\)-colorable. 
\end{defn}

The most widely studied case of \(t\)-tone coloring is the case \(t=2\).
Fonger et al.~\cite{fgps} calculated the \(2\)-tone chromatic number for all
trees. This includes stars, which often provide a good lower 
bound for \(\tau_2(G)\); see Proposition~\ref{prop: 2-tone star lower bound}.
Bickle and Phillps~\cite{bp} determined, among other results, the \(2\)-tone
chromatic number of cycles and the general \(t\)-tone chromatic number of
paths; see Proposition~\ref{prop: t-tone coloring paths}. This 
problem
has been studied for various graph classes~\cite{d,pt,y,w,bi2,d2} with
several papers investigating the \(t\)-tone
chromatic number of graph products~\cite{bi,lmmw, cw} and one 
studying \(t\)-tone coloring of random graphs~\cite{bbdf}. 

The paper is organized as follows.  In Section~\ref{defns-sec} we present our
definitions, and collect some lemmas (proved elsewhere) that we
will use in the remainder of the paper.

In Section~\ref{planar-sec}, we prove a sharp bound on $\tau_2(G)$ for all graphs $G$
with $\mad(G)<12/5$ (which includes planar graphs with girth at least 12), and a
nearly sharp bound on $\tau_2(G)$ for all outerplanar graphs.  
For all planar graphs $G$, we prove a new upper bound on $\tau_2(G)$, that is
sharp up to a factor of $2/\sqrt{3}\approx 1.155$.
We conclude the section with some challenging conjectures.
Our results in Section~\ref{planar-sec} partially answer a question
of West~\cite{west} about $t$-tone coloring of general planar graphs. 

In Section~\ref{cycles-sec} we determine $\tau_t(C_n)$ exactly, for all
$t\in\{3,4,5\}$ and all $n\ge 3$; for each $t$, the value is constant when $n$
is sufficiently large.  The general case relies on a powerful lemma for
combining $t$-tone colorings of subgraphs.  And the stronger lower bounds needed
for some exceptional cases are proved using integer linear programs.
Again, we conclude the section with a challenging conjecture.

In Section~\ref{grids-sec}, we study grid graphs $P_m\square P_n$. We determine
exactly $\tau_3$ and $\tau_4$ and bound $\tau_5$.  

\section{Definitions and Useful Lemmas}
\label{defns-sec}
Let \(G\) be a graph and fix \(v\in V(G)\). We denote by \(N(v)\) the
\textit{neighborhood} of \(v\),  by \(N^2(v)\) the \textit{second
neighborhood} of \(v\) (the set of vertices at distance 2 from \(v\)), by
\(d(v)\) the \textit{degree} of \(v\), and by \(\Delta(G)\) the \textit{maximum
degree} of \(G\). 
We denote by \(\overline{d}(G)\) the \textit{average degree} of \(G\) and by
\(\text{mad}(G)\) the \textit{maximum average degree} of \(G\). 
We write \(H\subseteq G\) if \(H\) is a subgraph of \(G\). 
We let \(P_n\),
\(C_n\), and \(P_m\square P_n\) denote the path on \(n\) vertices, cycle on
\(n\) vertices, and the \(m\times n\) vertex grid graph (where \(\square\)
denotes the Cartesian product). 

For a graph \(G\) and \(t,k\in\ZZ^+\) a \emph{partial \(t\)-tone \(k\)-coloring} of
\(G\) is a function \(f:V(G)\rightarrow \binom{[k]}{t}\cup \emptyset\) such that
\(\card{f(v)\cap f(w)}<d(v,w)\) for all distinct \(v,w\in V(G)\).  To construct
a $t$-tone $k$-coloring of a graph $G$, we will often create a sequence of
partial $t$-tone $k$-colorings, at each step choosing labels for an additional
vertex that was previously unlabeled.

Below we list a number of lemmas that we will use later.  
We generally omit formal proofs, but often include brief proof sketches. The
reader should feel free to skip ahead to Section~\ref{sec3} and only return to this list as needed.
\begin{lemma}
\label{prop: t-tone lower bound from subgraphs}
~\cite[Theorem 11]{fgps} If \(H\) is a subgraph of \(G\), then every \(t\)-tone coloring of \(G\) induces a \(t\)-tone coloring of \(H\). In particular \(\tau_t(H)\leq \tau_t(G)\).
\end{lemma}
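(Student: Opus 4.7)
The plan is to exploit the monotonicity of distance under taking subgraphs. The key observation is that any path in $H$ between vertices $v$ and $w$ is also a path in $G$ (since $H$ has no vertices or edges beyond those of $G$), so the distance can only increase upon passing to the subgraph: $d_G(v,w)\le d_H(v,w)$ for every pair $v,w\in V(H)$, with the usual convention that the distance is $+\infty$ if no path exists.

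Given a $t$-tone $k$-coloring $f$ of $G$, I would simply take the restriction $f_H := f|_{V(H)}$. The only thing to verify is the coloring condition on $H$. For any distinct $v,w\in V(H)$,
\[
\card{f_H(v)\cap f_H(w)} = \card{f(v)\cap f(w)} < d_G(v,w) \le d_H(v,w),
\]
where the strict inequality is the hypothesis that $f$ is a valid $t$-tone coloring of $G$. Hence $f_H$ is a $t$-tone $k$-coloring of $H$.

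The "in particular" clause is immediate: applying the above construction to any $t$-tone $\tau_t(G)$-coloring of $G$ yields a $t$-tone coloring of $H$ that uses at most $\tau_t(G)$ colors, so $\tau_t(H)\le \tau_t(G)$. There is no real obstacle in this proof; the only point worth stressing for the reader is the perhaps counterintuitive direction of the distance inequality (deleting edges/vertices makes distances larger, not smaller), which is precisely what makes the $t$-tone constraint easier to satisfy on $H$ than on $G$.
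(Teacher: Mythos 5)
Your proof is correct: restricting the coloring to \(V(H)\) and using the monotonicity \(d_G(v,w)\le d_H(v,w)\) for subgraphs is exactly the standard argument, and it is the one intended by the citation. The paper itself gives no proof of this lemma (it simply cites Fonger et al.), so there is nothing further to compare; your write-up, including the handling of the infinite-distance case, is complete.
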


\begin{lemma}
\label{prop: t-tone coloring number of C_4}
~\cite[Theorem 1]{w} \(\tau_t(C_4)=4t-2\).
\end{lemma}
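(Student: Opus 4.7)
The plan is to prove both bounds directly from the structure of $C_4$. Label the vertices $v_1, v_2, v_3, v_4$ cyclically, so that $v_1v_3$ and $v_2v_4$ are the two pairs at distance $2$, and all other pairs of distinct vertices are at distance $1$.

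For the lower bound $\tau_t(C_4)\ge 4t-2$, I would fix a $t$-tone $k$-coloring $f$ and bound $|f(v_1)\cup f(v_2)\cup f(v_3)\cup f(v_4)|$ from below by inclusion--exclusion. The adjacency condition forces $f(v_i)\cap f(v_{i+1})=\emptyset$ (indices mod $4$), so four of the six pairwise intersections are empty; the remaining two satisfy $|f(v_1)\cap f(v_3)|\le 1$ and $|f(v_2)\cap f(v_4)|\le 1$ by the distance-$2$ condition. Every triple $\{v_i,v_j,v_k\}$ contains at least one adjacent pair, so all triple intersections vanish, as does the quadruple intersection. Inclusion--exclusion then gives
\[
k \;\ge\; \bigl|f(v_1)\cup f(v_2)\cup f(v_3)\cup f(v_4)\bigr| \;=\; 4t - |f(v_1)\cap f(v_3)| - |f(v_2)\cap f(v_4)| \;\ge\; 4t-2.
\]

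For the upper bound $\tau_t(C_4)\le 4t-2$, I would exhibit an explicit coloring from $[4t-2]$. Reserve two ``shared'' colors, say $4t-3$ and $4t-2$, one for each diagonal; then partition $\{1,\ldots,4t-4\}$ into four disjoint $(t-1)$-sets $A_1,A_2,A_3,A_4$ and set
\[
f(v_1)=A_1\cup\{4t-3\},\quad f(v_3)=A_3\cup\{4t-3\},\quad f(v_2)=A_2\cup\{4t-2\},\quad f(v_4)=A_4\cup\{4t-2\}.
\]
A direct check shows adjacent vertices get disjoint sets (since $4t-3$ appears only on the $v_1,v_3$ diagonal and $4t-2$ only on the $v_2,v_4$ diagonal, and the $A_i$ are disjoint), while opposite vertices share exactly one color. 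Thus $f$ is a valid $t$-tone $(4t-2)$-coloring.

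There is no real obstacle here; the only place that needs care is the claim that all triple intersections vanish in the inclusion--exclusion step, which is immediate because every $3$-subset of $V(C_4)$ contains an edge. Combining the two bounds yields $\tau_t(C_4)=4t-2$.
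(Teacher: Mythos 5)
Your proof is correct. The lower bound is the same counting argument the paper sketches (non-adjacent pairs share at most one color, adjacent pairs share none, so the union of the four labels has size at least $4t-2$); you have merely written out the inclusion--exclusion details. The explicit coloring for the upper bound, with one reserved color per diagonal and disjoint $(t-1)$-sets elsewhere, is a valid construction that the paper does not give at all --- it delegates that direction entirely to the cited source --- so your write-up is a self-contained completion of what the paper only cites.
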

\begin{proof}
The labels for each pair of non-adjacent vertices share at most one color. So \(\tau_t(C_4)\geq t\card{V(C_4)}-2(1)\).
\end{proof}

\begin{lemma}
\label{prop:star}
\label{prop: 2-tone star lower bound}
~\cite[Theorem 17]{fgps} All graphs \(G\) satisfy \(
\left\lceil \sqrt{2\Delta(G)+0.25}+2.5\right\rceil\leq \tau_2(G)\).
\end{lemma}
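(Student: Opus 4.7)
The plan is to reduce to a star subgraph $K_{1,\Delta(G)}$ centered at a vertex of maximum degree. By Lemma~\ref{prop: t-tone lower bound from subgraphs}, any lower bound on $\tau_2(K_{1,\Delta(G)})$ transfers to $\tau_2(G)$, so I would work exclusively with a star from this point on.

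Fix a $2$-tone $k$-coloring $f$ of a star with center $v$ and leaves $u_1,\dots,u_\Delta$, where $\Delta:=\Delta(G)$. The edge between $v$ and each $u_i$ forces $\card{f(v)\cap f(u_i)}<1$, so every $f(u_i)$ is a $2$-subset of the $(k-2)$-element set $[k]\setminus f(v)$. Any two distinct leaves lie at distance $2$ in the star, so $\card{f(u_i)\cap f(u_j)}<2$; for $2$-sets, this is equivalent to $f(u_i)\ne f(u_j)$. Hence the $\Delta$ labels $f(u_1),\dots,f(u_\Delta)$ are distinct $2$-subsets of a set of size $k-2$, which gives $\binom{k-2}{2}\ge \Delta$.

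The remaining task is purely algebraic: invert the inequality $(k-2)(k-3)\ge 2\Delta$. Substituting $k=m+5/2$ turns the left side into $m^2-1/4$, so the inequality becomes $m^2\ge 2\Delta+1/4$, equivalently $k\ge\sqrt{2\Delta+1/4}+5/2$. Since $k$ must be a positive integer, taking the ceiling yields $\tau_2(G)\ge\left\lceil\sqrt{2\Delta(G)+0.25}+2.5\right\rceil$, as claimed. I do not anticipate any real obstacle: the only care needed is in tracking the $0.25$ and $2.5$ constants, which is what completing the square around $k=5/2$ does automatically.
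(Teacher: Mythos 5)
Your proposal is correct and follows exactly the paper's (sketched) argument: the star $K_{1,\Delta(G)}$ forces the leaves to receive distinct $2$-sets disjoint from the center's label, giving $\binom{k-2}{2}\ge\Delta(G)$, which your completing-the-square computation correctly converts into the stated bound.
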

\begin{proof}
The star \(K_{1,\Delta(G)}\) needs \(k\) colors with \(\binom{k-2}{2}\geq \Delta(G)\).
\end{proof}

\begin{lemma}
\label{prop: t-tone coloring paths}
~\cite[Proposition 5]{bp} For all \(t,n\geq 1\) we have \(\tau_t(P_n)=\sum_{i=0}^{n-1}\max{\left\{0,t-\binom{i}{2}\right\}}\). 
\end{lemma}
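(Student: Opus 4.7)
The plan is to prove the identity by establishing matching lower and upper bounds. Label the path vertices $v_0, v_1, \ldots, v_{n-1}$ in order so that $d(v_i, v_j) = |i-j|$.

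For the lower bound, I would take an arbitrary $t$-tone $k$-coloring $f$ and, for each $i$, count the set of newly introduced colors $N_i := f(v_i) \setminus \bigcup_{j<i} f(v_j)$. The sets $N_i$ are pairwise disjoint, so $k \geq \sum_{i=0}^{n-1} |N_i|$. The $t$-tone constraint forces $|f(v_i) \cap f(v_j)| \leq i-j-1$ for every $j<i$, so by a union bound
\[
\Bigl|f(v_i) \cap \bigcup_{j<i} f(v_j)\Bigr| \;\leq\; \sum_{j=0}^{i-1}(i-j-1) \;=\; \binom{i}{2}.
\]
Combined with the trivial $|N_i| \geq 0$, this gives $|N_i| \geq \max\{0, t - \binom{i}{2}\}$, and summing over $i$ yields the desired lower bound.

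For the upper bound, I plan a greedy construction that processes $v_0, \ldots, v_{n-1}$ in order and introduces exactly $\max\{0,t-\binom{i}{2}\}$ fresh colors at step $i$. When placing $v_i$, the pairwise constraint with $v_{i-j}$ permits reusing up to $j-1$ of its colors, giving a total reuse budget of $\binom{i}{2}$. My intended implementation tags each color with the index $\ell$ of the vertex where it was first introduced (its \emph{batch} $B_\ell$) and uses a uniform reuse rule: at step $i$, draw a prescribed number of colors from each earlier batch $B_\ell$ so that $v_i$ accumulates exactly $i-\ell-1$ colors of batch $B_\ell$ once all vertices $v_{\ell+2},\ldots,v_i$ have been assigned (padding with fresh colors when the budget falls short of $t$).

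The main obstacle is verifying the intersection bound $|f(v_i)\cap f(v_{i-k})|\leq k-1$ for \emph{every} pair $(i,k)$, not just those appearing explicitly in the greedy budget. I would handle this by a bookkeeping argument: $f(v_i)\cap f(v_{i-k})$ consists precisely of colors belonging to batches contributed to by both vertices, and the total count is bounded by the same arithmetic identity $\sum_\ell (\text{reuse counts from }B_\ell)=\binom{k}{2}$ that drives the lower bound. The finicky part is the boundary case $\binom{i}{2}>t$, where the budget exceeds the demand and the reuse rule has to be pruned consistently across all vertices; I expect a strong induction on $i$, together with a careful choice of which batches to draw from first (e.g.\ the most recent ones, since they have the strictest sharing constraints), to make the argument go through.
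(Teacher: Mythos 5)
Your lower-bound argument is complete and correct (the paper, citing Bickle--Phillips, does not even sketch that direction, and it only sketches the construction). The genuine gap is in the upper bound, precisely at the point you flag, and the bookkeeping you propose cannot close it. Your rule tracks each color only by the \emph{batch} $B_\ell$ of its first introduction and prescribes only \emph{how many} colors $v_i$ draws from each batch. But the pairwise constraints are not governed by first introduction: once a color of $B_\ell$ has been reused on an intermediate vertex $v_j$ with $\ell<j<i$, it counts against the much tighter budget $\card{f(v_i)\cap f(v_j)}\le i-j-1$ (including the budget $0$ for $j=i-1$), and per-batch counts are blind to this. Concretely, for $t=3$ the only nonempty batches are $B_0,B_1,B_2$, of sizes $3,3,2$, so from $v_4$ onward every vertex fills all three of its slots from the same eight colors; the quota ``exactly $i-\ell-1$ colors of $B_\ell$'' is not even feasible (it exceeds both $t$ and the batch sizes), and any rule fixing only per-batch quotas fails: taking $\min(i-\ell-1,\card{B_\ell})$ from the most recent batches first gives $v_4$ one color of $B_2$ and $v_5$ both colors of $B_2$, so adjacent vertices share a color, while oldest-batches-first makes $v_4$ and $v_5$ identical. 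Likewise, the identity $\sum_\ell(\cdot)=\binom{k}{2}$ you invoke bounds an aggregate over all earlier vertices, whereas what must be verified is the per-pair bound $\card{f(v_i)\cap f(v_{i-k})}\le k-1$; the lower-bound arithmetic does not transfer to the construction this way.

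The missing idea, and the paper's route, is to do the bookkeeping by \emph{most recent use} rather than first introduction: when coloring $v_i$, for each $j\ge 1$ take $j$ colors that appear on $v_{i-j-1}$ but on none of $v_{i-j},\dots,v_{i-1}$, stopping once $v_i$ has $t$ colors; a pool-size count shows that $t-\binom{j}{2}\ge j$ such colors are available whenever they are needed, so exactly $\max\{0,t-\binom{i}{2}\}$ fresh colors enter at $v_i$. With this rule, a reused color automatically avoids every vertex since its last occurrence, which is what makes the pairwise verification tractable; one still must choose within each pool so that a color taken from a recent vertex does not also sit on a slightly older vertex whose quota of $k-1$ is already exhausted, and that verification is the substantive content of the cited proof. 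If you want to repair your plan, replace ``batch of first introduction'' by ``vertex of last use'' (indeed, the full occurrence history) as the unit of bookkeeping and run your induction on that; as written, the induction has nothing strong enough to carry.
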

\begin{proof}
Color the path $v_1\cdots v_n$ in order of increasing subscript.
When vertex \(v_i\) is being colored, for each \(j\in[i-1]\) there
are \(j\) colors used on \(v_{i-j-1}\) that are unused on vertices
closer to \(v_i\). We use these colors on \(v_i\) until either (a) $v_i$ has \(t\)
colors or (b) we run out of vertices. In the latter case, we have used
\(\sum_{j=0}^{i-1}j=\binom{i}{2}\) colors from previous vertices, and need
\(t-\binom{i}{2}\) new colors. When \(\binom{i}{2}\ge t\), no more new colors
are needed.
\end{proof}

\begin{lemma}
\label{lem: 2-tone constant Delta bound}
~\cite[Theorem 2.2]{ckk} Every graph \(G\) satisfies \(\tau_2(G)\leq \left\lceil(2+\sqrt{2})\Delta(G)\right\rceil\). 
\end{lemma}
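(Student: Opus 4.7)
The plan is to prove this by a direct greedy argument. Let $\Delta=\Delta(G)$ and set $k=\lceil(2+\sqrt{2})\Delta\rceil$. Fix an arbitrary ordering $v_1,\ldots,v_n$ of $V(G)$, and construct a $2$-tone $k$-coloring by labeling the vertices in order. At the step of labeling $v_i$, consider the partial $2$-tone $k$-coloring of $\{v_1,\ldots,v_{i-1}\}$ already built; I need to show that some $2$-set from $\binom{[k]}{2}$ is available for $v_i$.

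The distance constraint has only two cases that matter. If $w\in N(v_i)$ has already been labeled, then $f(v_i)$ must be disjoint from $f(w)$; if $w\in N^2(v_i)$ has already been labeled, then $f(v_i)\neq f(w)$ (but $|f(v_i)\cap f(w)|\le 1$ is allowed). Distance $\ge 3$ imposes no constraint, since any two $2$-sets intersect in at most $2$. So the labeled neighbors of $v_i$ together forbid a set $S\subseteq [k]$ of colors with $|S|\le 2|N(v_i)|\le 2\Delta$, and $f(v_i)$ must be chosen as a $2$-subset of $[k]\setminus S$. The number of such $2$-subsets is at least $\binom{k-2\Delta}{2}$. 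From these, each already-labeled $w\in N^2(v_i)$ rules out at most one further $2$-set, and $|N^2(v_i)|\le \Delta(\Delta-1)$.

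Thus a valid label exists for $v_i$ as long as
\[
\binom{k-2\Delta}{2}>\Delta(\Delta-1),
\]
equivalently $(k-2\Delta)(k-2\Delta-1)>2\Delta(\Delta-1)$. Since $k\ge(2+\sqrt{2})\Delta$, we have $k-2\Delta\ge\sqrt{2}\,\Delta$, and the left side is at least $\sqrt{2}\,\Delta(\sqrt{2}\,\Delta-1)=2\Delta^2-\sqrt{2}\,\Delta$, which strictly exceeds $2\Delta^2-2\Delta$ because $\sqrt{2}<2$. Hence the greedy procedure succeeds, producing a $2$-tone $k$-coloring of $G$.

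There is essentially no obstacle beyond the choice of constant: the argument is an ordinary greedy sweep, and the constant $2+\sqrt{2}$ appears precisely because balancing $2\Delta$ (forbidden colors from $N(v_i)$, scaling linearly) against $\Delta(\Delta-1)$ forbidden $2$-sets (from $N^2(v_i)$, scaling quadratically) forces $k-2\Delta$ to be on the order of $\sqrt{2}\,\Delta$. Nothing depends on the ordering of $V(G)$, so no clever vertex ordering (e.g.\ degeneracy) is needed; any order works.
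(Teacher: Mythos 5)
Your proof is correct and is essentially the paper's own argument: the paper's proof sketch is exactly this greedy sweep, with neighbors forbidding at most $2\Delta(G)$ colors and second-neighborhood vertices forbidding at most $\Delta(G)(\Delta(G)-1)$ $2$-sets, and you have simply filled in the arithmetic verifying $\binom{k-2\Delta}{2}>\Delta(\Delta-1)$ for $k=\lceil(2+\sqrt{2})\Delta\rceil$. No substantive differences.
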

\begin{proof}
We color greedily avoiding at most \(2\Delta(G)\) colors on neighbors and at most \(\Delta(G)(\Delta(G)-1)\) 2-sets at distance 2. 
\end{proof}

A graph is \(k\)\textit{-degenerate} if each of its subgraphs contains a vertex of degree at most \(k\). 
\begin{lemma}
\label{prop: t-tone coloring k-degenerate graphs}
~\cite[Theorem 3.5]{ckk} If \(G\) is \(k\)-degenerate, \(k\geq 2\), and
\(\Delta(G)\leq r\), then for every \(t\) we have \(\tau_t(G)\leq kt+kt^2\Delta(G)^{1-1/t}\). 
\end{lemma}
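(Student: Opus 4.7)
\emph{Proof proposal.} The plan is to color greedily along a $k$-degeneracy ordering, handling the neighbor conflicts trivially and the distance-$2$ conflicts via a low-degree greedy on an auxiliary hypergraph.

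Fix an ordering $v_1,\dots,v_n$ of $V(G)$ in which each $v_i$ has $\le k$ neighbors in $\{v_1,\dots,v_{i-1}\}$, and process the vertices in this order using a palette of $N:=kt+kt^2\Delta(G)^{1-1/t}$ colors. When coloring $v=v_i$, the at most $k$ already-colored neighbors of $v$ together forbid at most $kt$ colors (since $|f(v)\cap f(u)|<d(v,u)=1$ for each such neighbor $u$), leaving $\ge N-kt$ colors available. The remaining constraint is that $|f(v)\cap f(w)|\le 1$ for every already-colored $w$ at distance exactly $2$ from $v$; let $W$ denote the set of such $w$.

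The key preliminary step is to show $|W|\le 2k\Delta(G)$, rather than the trivial $\Delta(G)^2$. Through each of the $\le k$ \emph{earlier} neighbors of $v$ we reach $\le \Delta(G)-1$ colored vertices at distance $\le 2$, while through each of the $\le\Delta(G)$ \emph{later} neighbors $x$ of $v$ we reach only $\le k-1$ such vertices: the colored neighbors of $x$ other than $v$ are a subset of the $\le k$ earlier neighbors of $x$ at the time $x$ was processed, one of which is $v$ itself. With this in hand I would build $f(v)=\{c_1,\dots,c_t\}$ greedily from the $\ge N-kt$ available colors, each $c_j$ chosen to have low ``degree'' $d(c):=|\{w\in W:c\in f(w)\}|$ in the hypergraph $\{f(w)\}_{w\in W}$, subject to the active constraints (i.e.\ avoiding every $f(w)$ that already shares a color with $\{c_1,\dots,c_{j-1}\}$). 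Because $\sum_c d(c)\le t|W|$, at most $t|W|/D$ colors exceed any threshold $D$, and each low-degree pick activates $\le D$ new constraints that in turn block at most $t-1$ further colors each. The greedy therefore succeeds whenever $N-kt> t(t-1)D+t|W|/D+t$; balancing $D$ against $|W|\le 2k\Delta$ gives a sufficient condition of the form $N\ge kt+O(t^{3/2}\sqrt{k\Delta(G)})$, which is comfortably absorbed by the $kt+kt^2\Delta(G)^{1-1/t}$ stated in the lemma.

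The main obstacle is the greedy selection: a naive union bound over all forbidden pairs in the palette yields only a linear-in-$\Delta$ saving. Overcoming this requires first restricting $|W|$ to $O(k\Delta)$ via the degeneracy ordering (which is the essential use of $k$-degeneracy, since max degree alone permits $|N^2(v)|\approx\Delta^2$), and then using the low-degree threshold to amortize constraint activations across the $t$ chosen colors; together these give the $\sqrt{\Delta}$-type saving encoded (loosely) in the factor $\Delta(G)^{1-1/t}$.
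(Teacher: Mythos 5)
Your setup---greedy coloring along a $k$-degeneracy ordering, $kt$ colors forbidden by earlier neighbors, and the bound $|W|\le 2k\Delta(G)$ on colored second neighbors obtained by splitting the middle vertex of each $2$-path into earlier and later neighbors of $v$---is sound and matches the spirit of the paper's argument, and your amortized color-selection would suffice when $t=2$. But there is a genuine gap for $t\ge 3$: you enforce only the distance-$1$ and distance-$2$ constraints. A $t$-tone coloring must also satisfy $\card{f(v)\cap f(w)}\le d-1$ for every already-colored $w$ at each distance $d\in\{3,\dots,t\}$, and nothing you check implies these, so the coloring your greedy produces need not be $t$-tone. These longer-range constraints are not a lower-order nuisance: the degeneracy ordering still leaves on the order of $dk\Delta(G)^{d-1}$ relevant vertices at distance $d$, and it is the $d=t$ constraints that force the exponent $1-1/t$ in the bound (note $\Delta^{1-1/t}$ far exceeds $\sqrt{\Delta}$ for $t\ge 3$, whereas your analysis only ever produces a $\sqrt{\Delta}$-type term). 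Your closing remark that the ``$\sqrt{\Delta}$-type saving'' is what the factor $\Delta(G)^{1-1/t}$ encodes has the logic backwards: that factor is driven by the many weak constraints at the largest distance, not by the distance-$2$ ones.

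The paper's proof handles all distances simultaneously by a union bound over candidate $t$-sets rather than over individual colors: with $c$ colors surviving the neighbor constraints, a vertex at distance $d$ forbids at most $\binom{t}{d}\binom{c-d}{t-d}$ of the $\binom{c}{t}$ candidate $t$-sets (those sharing at least $d$ of its labels), and the degeneracy ordering bounds the number of relevant distance-$d$ vertices by $dk\Delta(G)(\Delta(G)-1)^{d-2}$; summing over $d=2,\dots,t$ and comparing with $\binom{c}{t}$ yields the choice $c=kt^2\Delta(G)^{1-1/t}$. To repair your argument you would need to add the distance-$d$ constraints for every $d\le t$, at which point you essentially recover this computation; the hypergraph-degree refinement you propose buys nothing beyond what the plain union bound already gives at the dominant distance $d=t$.
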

\begin{proof}
We color greedily with \(c+kt\) colors. Neighbors forbid at most \(kt\) colors,
and vertices at distance \(d\), for each \(d\in\{2,\dots,t\}\), forbid at most
\(\binom{t}{d}\binom{c-d}{t-d}dk\Delta(G)(\Delta(G)-1)^{d-2}\) sets of size \(t\) that share at least \(d\) elements.
\end{proof}

\begin{lemma}
\label{lem: planar graphs have low degree vertex with at most two high degree
neighbors}
~\cite[Theorem 2]{bkpy} For every planar graph \(G\) there exists \(v\in V(G)\)
such that \(d(v)\leq 5\) and \(v\) has at most two neighbors with degree at least 11.
\end{lemma}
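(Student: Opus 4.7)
The plan is to use a discharging argument on a plane embedding of $G$. Assign each vertex $v$ the initial charge $\mu(v) = d(v) - 6$, so that by the edge bound $|E(G)| \le 3|V(G)| - 6$ (from Euler's formula) the total initial charge satisfies $\sum_v \mu(v) = 2|E(G)| - 6|V(G)| \le -12$. Suppose for contradiction that every vertex $v$ with $d(v) \le 5$ has at least three neighbors of degree at least $11$; I will redistribute charge so that every vertex ends nonnegative, contradicting the negative total.

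First, we may assume $\delta(G) \ge 3$, since any vertex with $d(v) \le 2$ has at most two neighbors of any kind and already satisfies the conclusion. Next, I would impose discharging rules of the form: each vertex $u$ with $d(u) \ge 11$ sends a prescribed amount $\alpha_i$ to every neighbor $v$ with $d(v) = i$, for $i \in \{3, 4, 5\}$. Because such a $v$ has at least three donors by the contradiction hypothesis, choosing $\alpha_i$ so that $3 \alpha_i \ge 6 - i$---for example $\alpha_3 = 1$, $\alpha_4 = 2/3$, $\alpha_5 = 1/3$---makes the final charge of $v$ nonnegative; vertices of degree in $\{6, \dots, 10\}$ do not participate and already have nonnegative charge.

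The main obstacle is ensuring that each high-degree donor also retains nonnegative charge, since a vertex $u$ with many degree-$3$ neighbors could otherwise give away up to $d(u)$ units and end at $-6$. To close this gap I would pair the discharging with a planarity-based structural lemma bounding the weighted sum $\sum_{v \in N(u),\, d(v) \le 5} \alpha_{d(v)}$ by $d(u) - 6$; roughly, planarity forbids $u$ from having too long a cyclic run of low-degree neighbors each of which requires three high-degree anchors, since such configurations would generate forbidden minors. The particular thresholds $5$ and $11$ in the statement are chosen precisely so that the resulting linear inequalities balance. Once every vertex is verified nonnegative, summing contradicts the $-12$ bound above and yields the claim.
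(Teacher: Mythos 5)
The paper itself offers no proof of this lemma; it is imported verbatim from Balogh, Kochol, Pluh\'ar, and Yu \cite{bkpy}, so your argument has to stand on its own. The receiver side of your scheme is fine: the reduction to $\delta(G)\ge 3$ is correct, and with $\alpha_3=1$, $\alpha_4=2/3$, $\alpha_5=1/3$ every vertex of degree $3$, $4$, or $5$ that has three donors of degree at least $11$ indeed ends nonnegative. But the donor side, which is the only genuinely hard part, is not proved. You defer it to an unstated ``planarity-based structural lemma'' asserting $\sum_{v\in N(u),\,d(v)\le 5}\alpha_{d(v)}\le d(u)-6$ for every vertex $u$ of degree at least $11$, justified only by the heuristic that a long cyclic run of low-degree neighbors, each needing three high-degree anchors, would force a forbidden minor. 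That heuristic is false as a local statement: place $u$ in the center, surround it by degree-$3$ vertices $v_1,\dots,v_{11}$, and interleave high-degree vertices $b_1,\dots,b_{11}$ on an outer ring with $v_i$ adjacent to $u$, $b_i$, and $b_{i+1}$. This is planar, every $v_i$ has three high-degree anchors, and $u$ would send away $11$ units while holding only $5$. No $K_5$ or $K_{3,3}$ minor is forced locally; what prevents such configurations is only that they cannot be completed to a \emph{global} counterexample, which is precisely the content of the lemma. So the structural lemma you invoke is, in effect, equivalent to the statement being proved, and the argument is circular at its key step.

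A second, related point shows the gap cannot be patched within your framework of vertex charges alone. The hypotheses you actually use are the contradiction assumption plus $\sum_v(d(v)-6)=2|E(G)|-6|V(G)|\le -12$, and these are simultaneously satisfiable by non-planar graphs: in $K_{3,11}$ every vertex of degree at most $5$ (the eleven degree-$3$ vertices) has exactly three neighbors of degree $11$, yet the total charge is $11(3-6)+3(11-6)=-18\le -12$. Hence no assignment of discharging rules that relies only on the edge bound and the neighbor hypothesis can reach a contradiction; planarity must enter substantively, for instance through face charges in an embedding or through a bona fide structural analysis as in \cite{bkpy}. To complete your proof you would need to supply that missing planarity argument explicitly, not merely assert its existence.
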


\begin{lemma}
\label{lem: outerplanar graphs have a very low degree vertex with low degree
neighbors}
~\cite[Theorem 5]{f} For every outerplanar graph \(G\) there exists \(xy\in E(G)\) with \(d(x)=1\), or \(d(x)=2\) and \(d(y)\leq 4\).
\end{lemma}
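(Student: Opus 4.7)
My plan is to induct on $|V(G)|$. Small cases ($n\le 4$) are direct. For the inductive step, assume $\delta(G)\ge 2$ (else we are done with the degree-$1$ clause). If $G$ has a cut-vertex, apply the hypothesis to a leaf block $B$, which is $2$-connected outerplanar with at least three vertices. I would strengthen the inductive statement to yield two candidate edges coming from distinct leaves of the weak dual of $B$, so that one of them has both endpoints disjoint from the cut-vertex of $B$ (and therefore the same degrees in $G$ as in $B$).

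For the $2$-connected case, embed $G$ outerplanarly with outer Hamilton cycle $C=v_1\cdots v_n$, and let $T$ be the weak dual tree on the interior faces. Each leaf face $F$ of $T$ has exactly one chord edge and $\ell(F)-1\ge 2$ boundary edges, and every boundary vertex of $F$ other than the two chord endpoints has degree $2$ in $G$. If some leaf face satisfies $\ell(F)\ge 4$, two such vertices are adjacent on $C$ and the edge between them works. Otherwise every leaf face of $T$ is a triangle. Assuming for contradiction that every degree-$2$ vertex has both neighbors of degree at least $5$, I would run a discharging argument. The key structural fact is that any vertex $u$ with $d(u)\ge 3$ has at most two degree-$2$ neighbors: a degree-$2$ neighbor of $u$ must lie on $C$ with $u$ as one of its two boundary-neighbors. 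This yields $2n_2\le 2n_{\ge 5}$ for the counts $n_i$ of degree-$i$ vertices. Combining with the outerplanar bound $\sum_i(i-4)n_i\le -6$ from $|E(G)|\le 2n-3$, assign each vertex $v$ charge $d(v)-4$ and each face $F$ charge $\ell(F)-4$ (summing to $-8$), then transfer $1$ from every degree-$\ge 5$ vertex to each of its ear neighbors and distribute the outer-face surplus $n-4$ along $C$ to absorb the remaining deficits.

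The main obstacle will be this final discharging step: the raw degree inequalities do not by themselves contradict each other (for instance, $n_2=n_{\ge 5}=6$, $n_3=n_4=0$ is numerically consistent), so the rules must exploit the geometric layout of the embedding. In particular, two consecutive vertices of $C$ cannot both be ears (since an ear's only neighbors are its two boundary-neighbors), so ears are separated on $C$ by non-ears; and each leaf triangle contributes a distinct chord to the chord budget $m-n\le n-3$. Verifying that the outer-face surplus can be apportioned along $C$ so that every degree-$3$ vertex and every interior triangular (non-leaf) face ends with nonnegative charge is where I expect the bulk of the work.
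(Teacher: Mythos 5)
This lemma is not proved in the paper at all: it is quoted verbatim from Fabrici \cite[Theorem~5]{f}, so there is no in-paper argument to match your proposal against; the question is simply whether your proposal constitutes a proof, and it does not yet. The decisive step is missing. After reducing (correctly) to a $2$-connected outerplanar graph in which every leaf face of the weak dual is a triangle, and assuming every degree-$2$ vertex has both neighbors of degree at least $5$, everything hinges on the discharging argument, and you explicitly leave it unverified (``where I expect the bulk of the work''). As stated, the rules do not close: transferring $1$ from each $5^+$-vertex to each of its ear neighbors leaves a degree-$5$ vertex with two ear neighbors at charge $-1$; and in the extreme (maximal outerplanar) situation the interior triangles carry a total deficit of about $n-2$ while the outer face's surplus is only $n-4$, so ``apportioning the outer-face surplus along $C$'' cannot by itself restore nonnegativity, and degree-$2$ vertices sitting on the boundary of long non-leaf faces are not covered by your leaf-face analysis. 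Your own observation that $n_2=n_{\ge 5}=6$, $n_3=n_4=0$ is numerically consistent shows that the counting you have actually established does not contradict anything; the entire content of Fabrici's theorem is the geometric argument you have deferred.

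There is a second, smaller gap in the block-decomposition step. You invoke a strengthened induction hypothesis guaranteeing two candidate edges from distinct leaf faces of an endblock $B$, one of which avoids the cut vertex $c$. This is a genuinely stronger statement than the lemma itself and is never argued: the ears of the two leaf faces are distinct, but $c$ can be a chord endpoint of \emph{every} leaf face (as the hub of a fan is), so you must show that some ear's \emph{other} chord endpoint has degree at most $4$ in $B$ and is distinct from $c$ --- the plain statement only gives one good edge somewhere, possibly through $c$. (The case where $B$ is a cycle is fine, but the general strengthened claim needs its own induction or structural argument.) Until both the strengthened hypothesis and the discharging verification are supplied, this is a plausible plan rather than a proof.
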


We conclude this section with a construction of a planar graph that improves the
trivial lower bound, from Lemma~\ref{prop:star}, on colors needed to 2-tone color a
planar graph of given maximum degree.

\begin{lemma}
\label{lem:Ht}
\label{lem: 2-tone coloring the fat triangle}
 For each \(t\geq 1\), we form \(H_t\) from \(K_3\) by replacing each edge
\(vw\in E(K_3)\) with a copy of \(K_{2,t}\), identifying the high degree
vertices with \(v\) and \(w\). For all $t$ we have
$\left\lceil\sqrt{3\Delta(H_t)+0.25}+0.5\right\rceil
\leq\tau_2(H_t)\leq
\left\lceil\sqrt{3\Delta(H_t)+30.25}+0.5\right\rceil$.
(When $t\ge 33$ these two bounds differ by at most 1.)

\end{lemma}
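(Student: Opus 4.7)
The plan is to start with two structural observations about $H_t$. First, $\Delta(H_t)=2t$: each of the three corner vertices from the original $K_3$ is the high-degree vertex in two copies of $K_{2,t}$ and hence has $2t$ neighbors, while each of the $3t$ middle vertices has degree $2$. Second, every two middle vertices are at distance exactly $2$ in $H_t$: middles in a common $K_{2,t}$ share both corner endpoints as neighbors, and middles in $K_{2,t}$'s that share a corner have that corner as a common neighbor.

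For the lower bound, the $3t$ middle $2$-sets must be pairwise distinct, since distinct $2$-sets intersect in at most one element and that is exactly the distance-$2$ constraint. Hence $\binom{k}{2}\geq 3t$, and solving $k(k-1)\geq 6t$ gives $k\geq \tfrac{1}{2}+\sqrt{6t+\tfrac{1}{4}}=\tfrac{1}{2}+\sqrt{3\Delta(H_t)+\tfrac{1}{4}}$; taking ceilings matches the claim.

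For the upper bound, let $k$ be the smallest integer with $\binom{k}{2}\geq 3t+15$, equivalently $k\geq \tfrac{1}{2}+\sqrt{3\Delta(H_t)+\tfrac{121}{4}}$. Assign $f(u)=\{1,2\}$, $f(v)=\{3,4\}$, $f(w)=\{5,6\}$, which satisfies the distance-$2$ constraint among corners. Each middle on edge $xy$ needs a $2$-set disjoint from $f(x)\cup f(y)$ (the distance-$1$ constraint); the distance from a middle to the opposite corner is $3$, so no further constraint applies. Classify the candidate middle $2$-sets as Type $A$, the $\binom{k-6}{2}$ $2$-sets inside $\{7,\ldots,k\}$ (usable on any edge), and Type $B_e$ for each edge $e$, the $2$-sets $\{i,j\}$ with $j\in\{7,\ldots,k\}$ and $i$ in the label of the corner opposite $e$. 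For example, Type $B_{uv}=\{\{5,j\},\{6,j\}:j\in\{7,\ldots,k\}\}$ contributes $2(k-6)$ $2$-sets usable only on edge $uv$, since any $2$-set containing $5$ or $6$ meets $f(w)$ and so is forbidden on edges $uw$ and $vw$.

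On each edge use the $2(k-6)$ Type $B_e$ $2$-sets first, then fill any remaining middles with distinct Type $A$ $2$-sets; the total Type $A$ requirement is $3\max\{0,\,t-2(k-6)\}$, and when $t>2(k-6)$ the feasibility inequality $3(t-2(k-6))\leq \binom{k-6}{2}$ rearranges algebraically to $(k-6)(k+5)\geq 6t$, i.e., $\binom{k}{2}\geq 3t+15$, which holds by choice of $k$. The main obstacle is the bookkeeping in this last step: verifying that the type classification produces exactly $\binom{k-6}{2}$ Type $A$ and $2(k-6)$ Type $B_e$ $2$-sets per edge (with the Type $B_e$ pools disjoint for distinct $e$), so that the algebraic rearrangement of the feasibility inequality lines up exactly with the claimed threshold $\binom{k}{2}\geq 3t+15$.
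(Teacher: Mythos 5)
Your proof of the two displayed bounds is correct and follows essentially the same approach as the paper: the identical lower-bound count (the $3t$ degree-2 vertices are pairwise at distance 2, so $\binom{k}{2}\ge 3t$) and the identical upper-bound construction (corners colored $\{1,2\},\{3,4\},\{5,6\}$ and the count $\binom{k}{2}-\binom{6}{2}\ge 3t$), with your Type $A$/Type $B_e$ bookkeeping simply making explicit the distribution step the paper dismisses as ``easy to ensure.'' The one piece of the statement you do not address is the parenthetical claim that the two bounds differ by at most 1 when $t\ge 33$; the paper handles this by noting $\sqrt{3\Delta(H_t)+30.25}-\sqrt{3\Delta(H_t)+0.25}\le 1$ once $\Delta(H_t)\ge 70$ (i.e.\ $t\ge 35$) and checking the two remaining values of $t$ by hand, and a complete write-up should include this short verification.
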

\begin{proof}
Fix a positive integer $k$ to be determined later.
We consider a 2-tone $k$-coloring of $H_t$.
It is easy to check that $\tau_2(C_6)=5$, so assume $t\ge 2$.
Let $x$, $y$, and $z$ denote the vertices of degree at least 4.
For the lower bound, note that all $3\Delta(H_t)/2=3t$ vertices excluding $x$,
$y$, $z$ must get distinct 2-element subsets of $\binom{k}{2}$.
The inequality $\binom{k}2\ge 3\Delta(H_t)/2$ is equivalent to the lower bound.

Now we prove the upper bound.
Color $x$ with $\{1,2\}$; color $y$ with $\{3,4\}$; and color $z$ with
$\{5,6\}$.  Now we assign each remaining vertex of $H_t$ a distinct element of
$\binom{[k]}2\setminus\{\{1,2\},\{3,4\},\{5,6\}\}$.  This requires that no
vertex of degree 2 receive a label from $\binom{[6]}2$.  Thus, we need
$\binom{k}2-\binom{6}2\ge 3t$.  This inequality is equivalent to the upper
bound.  We must also ensure that the coloring is proper, i.e., all labels
including 1 or 2 (other than $\{1,2\}$) be used on vertices non-adjacent to $x$,
and similarly for $\{3,4\}$ with $y$ and for $\{5,6\}$ with $z$.
However, this is easy to ensure.

Finally, we show that the bounds differ by at most 1 when $t\ge 33$.
For this conclusion, it suffices that
$\sqrt{3\Delta(H_t)+30.25}-\sqrt{3\Delta(H_t)+0.25}\le 1$.  This inequality
holds when $\Delta(H_t)\ge 70$, i.e., when $t\ge 35$.  And it easy to check the remaining 2 cases by hand.
\begin{figure}[H]
\centering
\begin{tikzpicture}
\node[fill=black, circle, inner sep=2pt, label={180:{\(x\)}}] (x) at (0,0) {};
\node[fill=black, circle, inner sep=2pt, label={0:{\(y\)}}] (y) at (3,0) {};
\node[fill=black, circle, inner sep=2pt, label={90:{\(z\)}}] (z) at (1.5,2.5) {};

\node[Simple Node] (x0) at (0:1.5cm) {}; \draw[thick,black] (x)--(x0)--(y);
\node[Simple Node] (x1) at (10:1.52cm) {}; \draw[thick,black] (x)--(x1)--(y);
\node[Simple Node] (x2) at (-10:1.52cm) {}; \draw[thick,black] (x)--(x2)--(y);

\node[Simple Node] (z0) at (60:1.5cm) {}; \draw[thick,black] (x)--(z0)--(z);
\node[Simple Node] (z1) at (70:1.52cm) {}; \draw[thick,black] (x)--(z1)--(z);
\node[Simple Node] (z2) at (50:1.52cm) {}; \draw[thick,black] (x)--(z2)--(z);

\begin{scope}[xshift=3cm]
\node[Simple Node] (y0) at (120:1.5cm) {}; \draw[thick,black] (y)--(y0)--(z);
\node[Simple Node] (y1) at (110:1.52cm) {}; \draw[thick,black] (y)--(y1)--(z);
\node[Simple Node] (y2) at (130:1.52cm) {}; \draw[thick,black] (y)--(y2)--(z);
\end{scope}

\end{tikzpicture}
\caption{The graph \(H_3\).}
\label{fig: fat triangle for t=3}
\end{figure}
\aftermath
%
\end{proof}

\section{2-tone Coloring of Planar Graphs}
\label{sec3}
\label{planar-sec}
In this section, we prove our first two main results. In
Theorem~\ref{thm:outerplanar} we
determine $\tau_2(G)$ for all outerplanar graphs, up to a small additive constant.
And in Theorem~\ref{thm:mad} we determine $\tau_2(G)$ for all graphs $G$ with
$\mad(G)<12/5$ and $\Delta(G)\ge 11$.  This includes planar graphs with girth
at least 12.  As a warm-up, in Theorem~\ref{thm:planar} we bound $\tau_2(G)$
for all planar graphs; as $\Delta(G)$ grows, our bound is sharp asymptotically
up to a factor of $2/\sqrt{3}\approx 1.155$.

All our proofs in this section proceed by minimal counterexample.  This approach
requires extra care, since a 2-tone coloring of a subgraph $H$ of $G$ might fail
to induce a 2-tone coloring of $G[V(H)]$.
Specifically, if we delete a vertex $v$ to form a subgraph $H$, we allow the
possibility that neighbors of $v$ in $G$ will receive identical labels in $H$;
of course, this is forbidden in a 2-tone coloring of $G$.  To avoid this difficulty, rather than
deleting vertices, we often instead contract edges, which never increases
distances.  However, this adds the potential issue of increasing the maximum
degree.  To avoid this pitfall, we typically contract an
edge with one endpoint of degree at most 2.
To extend a partial 2-tone coloring of a graph $G$, we will often use the following
helpful lemma.

\begin{lemma}
\label{lem: 2-tone coloring via 1st and 2nd neighborhoods}
Let \(G\) be a graph and \(\varphi\) be a partial \(2\)-tone \(k\)-coloring of \(G\). For any uncolored vertex \(v\in V(G)\), if \(\binom{k-2\card{N(v)}}{2} > \card{N^2(v)}\), then \(\varphi\) can be extended to \(v\).
\end{lemma}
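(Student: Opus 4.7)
The plan is a straightforward counting argument. First I would characterize which 2-sets are valid labels for $v$, given the partial coloring $\varphi$. A 2-set $f(v) \in \binom{[k]}{2}$ is a valid extension precisely when $|f(v) \cap \varphi(u)| < d(v,u)$ for every colored $u \neq v$. For $u \in N(v)$ this forces $f(v) \cap \varphi(u) = \emptyset$, and for $u \in N^2(v)$ it forces $f(v) \neq \varphi(u)$. For any colored $u$ with $d(v,u) \geq 3$, the condition $|f(v) \cap \varphi(u)| \leq 2 < d(v,u)$ holds automatically, so such vertices pose no obstruction.

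Next I would count. The colored neighbors of $v$ collectively use at most $2|N(v)|$ colors (since each uses exactly 2 and we are upper bounding by ignoring overlaps and uncolored neighbors). Let $A \subseteq [k]$ denote the set of colors not used by any colored neighbor of $v$; then $|A| \geq k - 2|N(v)|$. Every 2-subset of $A$ satisfies the first-neighbor constraint automatically, and there are $\binom{|A|}{2} \geq \binom{k-2|N(v)|}{2}$ such 2-sets.

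Among these candidate 2-sets, we must avoid those equal to $\varphi(u)$ for some colored $u \in N^2(v)$; there are at most $|N^2(v)|$ such forbidden 2-sets (again an upper bound, since uncolored second-neighbors and second-neighbors whose label is not already in $\binom{A}{2}$ need not be avoided). Under the hypothesis $\binom{k-2|N(v)|}{2} > |N^2(v)|$, by pigeonhole at least one 2-set in $\binom{A}{2}$ is not forbidden, and assigning it to $v$ extends $\varphi$.

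There is no real obstacle here; the only thing to be careful about is that $|N(v)|$ and $|N^2(v)|$ are used as upper bounds on the number of \emph{colored} neighbors and second-neighbors and on the number of distinct 2-sets those contribute, which is valid since uncolored vertices impose no constraint at all and multiple colored vertices sharing structure only improves the counts.
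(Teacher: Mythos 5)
Your proof is correct and follows essentially the same counting argument as the paper: forbid at most $2\card{N(v)}$ colors from the neighbors, forbid at most $\card{N^2(v)}$ 2-sets from the second neighborhood, and apply the hypothesis to find an available 2-set for $v$. The extra care you take in noting that vertices at distance at least 3 and uncolored vertices impose no constraints is a fine (implicit in the paper) elaboration, not a different method.
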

\begin{proof}
Let \(G\), \(\varphi\), and \(v\) be as in the lemma. To extend \(\varphi\) to
\(v\), we must avoid all colors used on \(N(v)\), which forbids at most
\(2\card{N(v)}\) colors. 
We must also avoid all 2-sets used on \(N^2(v)\), which forbids at most
$|N^2(v)|$ 2-sets.
Thus, it suffices to have 
\(\binom{k-2\card{N(v)}}{2} > \card{N^2(v)}.\) 
\begin{figure}[h]
\centering
\begin{tikzpicture}
\node[fill=black, circle, inner sep=2pt, label={180:{\(v\)}}] (v) at (0,0) {};
    \foreach \i in {0,2,3,4,5} {
        \node[Simple Node] (v\i) at (\i*18:1.5cm) {}; 
        \draw[thick, black] (v)--(v\i) node[pos=0.9] (\i) {};
        \foreach \j in {0,1} {
            \tikzset{shift={(\i*18:1.5cm)}}
            \ifnum \i=0 
            \node[Simple Node] (w\i\j) at (\j*30+\i*18:\j*2+1 cm) {}; 
            \draw[thick, black] (v\i)--(w\i\j) {};
            \else 
            \node[Simple Node] (w\i\j) at (-\j*30+\i*18:\j*2+1 cm) {}; 
            \draw[thick, black] (v\i)--(w\i\j) {};
            \fi
        }
        \draw[thick, black, dotted] (w\i0) -- (w\i1);
    }
\draw[dotted, black,thick] (v0) -- (v2); \draw[dotted, black, thick] (w01) -- (w21);

\node at (-1,1.5) {\(\textcolor{black}{N(v)}\)};
\begin{pgfonlayer}{pre main}
    \draw[thick, draw=black, name path = A1] (1.85,0) arc (0:90:1.85cm);
    \draw[thick, black, name path = B1] (1.15,0) arc (0:90:1.15cm);
    \tikzfillbetween[of=A1 and B1]{black, opacity=0.2};
    \filldraw[thick, black, fill=black, fill opacity=0.2] (1.85,0) arc (0:-180:0.35cm);
    \filldraw[thick, black, fill=black, fill opacity=0.2] (0,1.85) arc (90:270:0.35cm);
\end{pgfonlayer}

\node at (-1,2.5) {\(\textcolor{black}{N^2(v)}\)};
\begin{pgfonlayer}{pre main}
    \draw[thick, draw=black, name path = A2] (2.85,0) arc (0:90:2.85cm);
    \draw[thick, black, name path = B2] (2.15,0) arc (0:90:2.15cm);
    \tikzfillbetween[of=A2 and B2]{black, opacity=0.2};
    \filldraw[thick, black, fill=black, fill opacity=0.2] (2.85,0) arc (0:-180:0.35cm);
    \filldraw[thick, black, fill=black, fill opacity=0.2] (0,2.85) arc (90:270:0.35cm);
\end{pgfonlayer}

\node at (-1,3.5) {\(\textcolor{black}{G-\{N(v)\cup N^2(v)\cup \{v\}\}}\)};
\draw[black, thick, name path=A3] plot [smooth, tension=0.8] coordinates {(0,2.85) (1,3.1) (2.35,2.35) (3.1,1) (2.85,0)};
\tikzfillbetween[of=A2 and A3]{black, opacity=0.2};

\end{tikzpicture}
\caption{A vertex \(v\) with its neighbours and second neighbours.}
\label{fig: 2-tone coloring with neighbourhoods and second neighbourhoods}
\end{figure}
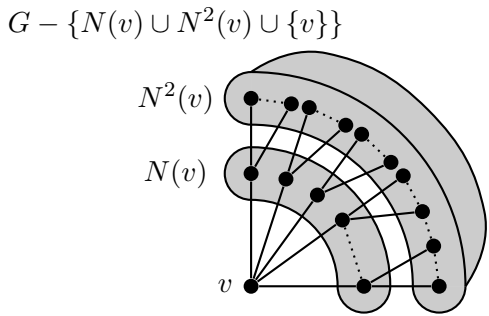
\end{proof}

We first prove an upper bound on $\tau_2(G)$ for every planar graph $G$, and
then show how to strengthen it for two classes of ``sparse'' planar graphs.
For a general planar graph $G$ (with maximum degree $\Delta(G)$), our upper bound
in the next theorem differs from the lower bound in Lemma~\ref{prop: 2-tone star lower bound}
by a factor of approximately $\sqrt{2}$. 
However, for our construction $H_t$ in Lemma~\ref{lem:Ht} the present upper bound
differs from the lower bound by only a facor of $2/\sqrt{3}\approx 1.155$.

\begin{theorem}
\label{thm:planar}
\label{thm: 2-tone coloring planar graphs}
 If \(G\) is a planar graph, then \(\tau_2(G)\leq
\left\lfloor\sqrt{4\Delta(G)+50.25}+31.1\right\rfloor\leq
\left\lfloor\sqrt{4\Delta(G)}+36.5\right\rfloor\). Furthermore,
\(\tau_2(G)\leq\max\left\{41, \left\lfloor\sqrt{4\Delta(G)+50.25}+11.5\right\rfloor\right\}.\)
 \end{theorem}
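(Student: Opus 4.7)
The plan is to prove each of the claimed upper bounds by induction on $|V(G)|$. Fix a planar graph $G$ with $\Delta=\Delta(G)$, let $k$ denote the target upper bound, and aim to produce a $2$-tone $k$-coloring of $G$. The main structural input will be Lemma~\ref{lem: planar graphs have low degree vertex with at most two high degree neighbors}, supplying a vertex $v$ with $d(v)\leq 5$ having at most two neighbors of degree at least $11$. To avoid the distance-shortening pitfall noted just above the theorem, I would not delete $v$ but rather contract an edge $vu$ for a well-chosen neighbor $u$; the resulting planar graph $G'$ has $|V(G')|<|V(G)|$, and since contraction never increases distances, any $2$-tone $k$-coloring of $G'$ yields a valid partial $2$-tone $k$-coloring of $G$ on $V(G)\setminus\{v\}$, which I would extend to $v$ using Lemma~\ref{lem: 2-tone coloring via 1st and 2nd neighborhoods}.

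The choice of $u$ controls $\Delta(G')$, because the contracted vertex has degree at most $d(u)+d(v)-2$. If $d(v)\leq 2$ (with $d(v)=1$ reducing to deletion), any neighbor $u$ yields $\Delta(G')\leq\Delta$. If $3\leq d(v)\leq 5$, Lemma~\ref{lem: planar graphs have low degree vertex with at most two high degree neighbors} guarantees some neighbor $u$ with $d(u)\leq 10$, so choosing this $u$ gives $\Delta(G')\leq \max\{\Delta,13\}$. For the extension, $|N(v)|\leq 5$ and $|N^2(v)|\leq 2(\Delta-1)+3\cdot 9=2\Delta+25$, so Lemma~\ref{lem: 2-tone coloring via 1st and 2nd neighborhoods} succeeds whenever $\binom{k-10}{2}>2\Delta+25$, equivalently $k>\sqrt{4\Delta+50.25}+10.5$. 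Both target values of $k$ satisfy this strictly: the first has $k\geq\sqrt{4\Delta+50.25}+30.1$ with ample slack, while $k=\lfloor\sqrt{4\Delta+50.25}+11.5\rfloor$ still exceeds $\sqrt{4\Delta+50.25}+10.5$ since $\lfloor x\rfloor>x-1$.

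The hard part will be closing the induction when $\Delta\leq 12$, because in the case $3\leq d(v)\leq 5$ the contraction may produce $\Delta(G')=13>\Delta$, so the inductive hypothesis does not directly supply $\tau_2(G')\leq k$. I plan to handle this regime by bypassing induction and invoking Lemma~\ref{lem: 2-tone constant Delta bound} directly. For the ``furthermore'' bound, when $\Delta\leq 12$ one has $k\geq 41\geq \lceil(2+\sqrt{2})\cdot 12\rceil$, so the lemma yields $\tau_2(G)\leq k$ immediately. For the first bound $k=\lfloor\sqrt{4\Delta+50.25}+31.1\rfloor$, a brief numerical check over $\Delta\in\{0,\dots,12\}$ confirms $\lceil(2+\sqrt{2})\Delta\rceil\leq k$ (with equality $41=41$ only at $\Delta=12$), so Lemma~\ref{lem: 2-tone constant Delta bound} closes these cases as well. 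Finally, the chained inequality $\lfloor\sqrt{4\Delta+50.25}+31.1\rfloor\leq\lfloor\sqrt{4\Delta}+36.5\rfloor$ reduces for $\Delta\geq 1$ to the elementary estimate $\sqrt{4\Delta+50.25}-\sqrt{4\Delta}\leq 5.4$.
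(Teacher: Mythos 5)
Your proposal is correct and follows essentially the same route as the paper: handle $\Delta(G)\le 12$ via Lemma~\ref{lem: 2-tone constant Delta bound}, otherwise use Lemma~\ref{lem: planar graphs have low degree vertex with at most two high degree neighbors} to find $v$ with $d(v)\le 5$, contract an edge to a low-degree neighbor so the maximum degree does not increase, and extend the inductively obtained coloring to $v$ via Lemma~\ref{lem: 2-tone coloring via 1st and 2nd neighborhoods} using the same counts ($10$ forbidden colors, $2\Delta(G)+25$ forbidden $2$-sets, inequality $\binom{k-10}{2}>2\Delta(G)+25$). The details, including the $\Delta\le 12$ base case and the final numerical verifications, match the paper's argument.
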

 \begin{proof}
In the first statement, the second inequality is easy to verify, so we focus on
the first.
The second statement is clearly stronger when $\Delta(G)$ is sufficiently large, but we
include the first to give a better bound when $\Delta(G)$ is small.
We prove both statements simultaneously.

 Suppose the theorem is false and let \(G\) be a counterexample that minimizes
\(\card{V(G)}\). If \(\Delta(G)\leq 12\), then Lemma~\ref{lem: 2-tone constant
Delta bound} gives \(\tau_2(G)\leq
\left\lceil(2+\sqrt{2})\Delta(G)\right\rceil\leq
\left\lfloor\sqrt{4\Delta(G)+50.25}+31.1\right\rfloor\leq 41\). So we assume
that \(\Delta(G)\geq 13\). By Lemma~\ref{lem: planar graphs have low degree
vertex with at most two high degree neighbors}, there exists \(v\in V(G)\) such
that \(d(v)\leq 5\) and \(v\) has at most two neighbors of degree at least 11.
If \(d(v)\geq 3\), then pick \(w\in N(v)\) with \(d(w)\leq 10\); otherwise let
\(w\) be an arbitrary neighbor of \(v\). Form \(H\) from \(G\) by contracting \(vw\). Since \(\card{V(H)}<\card{V(G)}\) and \(\Delta(H)\leq \max\{\Delta(G),5+10-2\}=\Delta(G)\), by induction \(\tau_2(H)\leq
\max\left\{41,\left\lfloor\sqrt{4\Delta(H)+50.25}+11.5\right\rfloor\right\}\leq
\max\left\{41,\left\lfloor\sqrt{4\Delta(G)+50.25}+11.5\right\rfloor\right\}\). 
Similarly, \(\tau_2(H)\leq
\left\lfloor\sqrt{4\Delta(H)+50.25}+31.1\right\rfloor\leq
\left\lfloor\sqrt{4\Delta(G)+50.25}+31.1\right\rfloor\). 
Now
\(N_G(v)\) forbids at most \(2\card{N_G(v)}\leq 10\) colors from use on \(v\).
Further, vertices in \(N_G^2(G)\) forbid at most
\(2(\Delta(G)-1)+3(9)=2\Delta(G)+25\) distinct 2-sets from use on \(v\). By
Lemma~\ref{lem: 2-tone coloring via 1st and 2nd neighborhoods} we can extend
any \(2\)-tone \(k\)-coloring of \(H\) to a 2-tone \(k\)-coloring of \(G\)
whenever \(\Delta(G)\geq 13\) and
 \[\binom{k-10}{2}> 2\Delta(G)+25.\]
This inequality is easy to verify when
\(k=\left\lfloor\sqrt{4\Delta(G)+50.25}+11.5\right\rfloor\), 
which completes the proof of both statements.
 \end{proof}

%

In the next two theorems, we consider special classes of planar graphs that are
in a sense ``tree-like". For these graphs, we improve the leading coefficient in
the bound of Theorem~\ref{thm: 2-tone coloring planar graphs} by a factor of
approximately $\sqrt{2}$, so that it matches that in the lower bound
given by Lemma~\ref{prop: 2-tone star lower bound}.
 
 \begin{theorem}
 \label{thm:outerplanar}
 \label{thm: 2-tone coloring outerplanar graphs}
 If \(G\) is outerplanar, then
\(\tau_2(G)\leq\left\lfloor\sqrt{2\Delta(G)+4.25}+5.5\right\rfloor\leq
\left\lfloor\sqrt{2\Delta(G)}+6.6\right\rfloor\).
 \end{theorem}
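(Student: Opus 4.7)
The plan is to proceed by minimum counterexample, mirroring the contract-and-extend structure of Theorem~\ref{thm:planar} but replacing the planar reducible configuration by the stronger outerplanar one from Lemma~\ref{lem: outerplanar graphs have a very low degree vertex with low degree neighbors}. Fix $k:=\lfloor\sqrt{2\Delta(G)+4.25}+5.5\rfloor$ and let $G$ be an outerplanar counterexample of minimum order. The cited lemma supplies an edge $xy\in E(G)$ with either $d(x)=1$, or $d(x)=2$ and $d(y)\le 4$. In either case I would form $H$ from $G$ by contracting $xy$. Since outerplanarity is minor-closed, $H$ is outerplanar; the identified vertex $y^\ast$ has degree at most $d(x)+d(y)-2\le 4$, and no other vertex's degree goes up, so $\Delta(H)\le\max\{\Delta(G),4\}$. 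A quick check of the regime $\Delta(G)\le 3$ (where $d(y)\le 3$ forces $\Delta(H)\le 3$) shows that in fact $\Delta(H)\le\Delta(G)$ throughout.

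By minimality, $H$ admits a 2-tone $k$-coloring $\varphi_H$, which I would transfer to a partial 2-tone $k$-coloring $\varphi_G$ of $G$ by setting $\varphi_G(y):=\varphi_H(y^\ast)$ and $\varphi_G(u):=\varphi_H(u)$ for every other $u\in V(H)$, leaving $x$ uncolored. Because contraction never increases distances, every constraint $\card{\varphi_G(u)\cap\varphi_G(v)}<d_G(u,v)$ among colored pairs is inherited from $\varphi_H$. It remains to extend $\varphi_G$ to $x$, for which I would invoke Lemma~\ref{lem: 2-tone coloring via 1st and 2nd neighborhoods}.

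If $d(x)=1$, then $\card{N_G(x)}=1$ and $\card{N_G^2(x)}\le d(y)-1\le\Delta(G)-1$, and the extension condition $\binom{k-2}{2}>\Delta(G)-1$ holds with plenty of slack. If $d(x)=2$ with second neighbor $z$, then $\card{N_G(x)}=2$ and $\card{N_G^2(x)}\le(d(y)-1)+(d(z)-1)\le\Delta(G)+2$, so I need $\binom{k-4}{2}>\Delta(G)+2$, equivalently $(k-4)(k-5)>2\Delta(G)+4$. From the definition of $k$ one has $k>\sqrt{2\Delta(G)+4.25}+4.5$, which yields
\[
(k-4)(k-5)>\bigl(\sqrt{2\Delta(G)+4.25}+\tfrac12\bigr)\bigl(\sqrt{2\Delta(G)+4.25}-\tfrac12\bigr)=2\Delta(G)+4.
\]
This is the key inequality, and the second numerical form $\lfloor\sqrt{2\Delta(G)}+6.6\rfloor$ stated in the theorem then follows by an elementary algebraic estimate.

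The hard part is exactly this $d(x)=2$ case: the crude bound $\card{N_G^2(x)}\le\Delta(G)+2$ is essentially tight (achieved when $z$ has maximum degree and $y$ is the center of a $K_{1,4}$), so the additive constants $4.25$ and $5.5$ inside the floor are forced by the requirement that $(k-4)(k-5)>2\Delta(G)+4$ hold with essentially no slack. The resulting $\sqrt{2\Delta(G)}$ leading term then matches the star lower bound of Lemma~\ref{prop: 2-tone star lower bound} up to the additive constant promised by the theorem.
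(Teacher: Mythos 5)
Your proposal is correct and follows essentially the same route as the paper: a minimum counterexample, the outerplanar light-edge lemma giving $d(x)=1$ or $d(x)=2$ with $d(y)\le 4$, contraction of $xy$ (noting $\Delta(H)\le\Delta(G)$ and that contraction does not increase distances), and extension to $x$ via Lemma~\ref{lem: 2-tone coloring via 1st and 2nd neighborhoods} using the same counts (at most $4$ forbidden colors and at most $\Delta(G)+2$ forbidden $2$-sets, i.e.\ $\binom{k-4}{2}>\Delta(G)+2$). Your explicit verification of the key inequality from $k>\sqrt{2\Delta(G)+4.25}+4.5$ is exactly the computation the paper leaves as ``easy to verify.''
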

 \begin{proof}
The second inequality is easily verified by algebra, so we focus on the first.
 Suppose the theorem is false and let \(G\) be a counterexample minimizing \(\card{V(G)}\). 
Note that the class of outerplanar graphs is closed under edge contraction.

By Lemma~\ref{lem: outerplanar graphs have a very low degree vertex with low
degree neighbors} there exists \(vw\in E(G)\) such that
\(d(v)=1\), or \(d(v)=2\) and \(d(w)\leq 4\). In either case, form \(H\) by
contracting \(vw\) (restricting to the underlying simple graph if we create a
pair of parallel edges). Note that \(\card{H}<\card{G}\) and \(\Delta(H)\leq
\Delta(G)\). By the minimality of \(G\), 
\[\tau_2(H)\leq\left\lfloor\sqrt{2\Delta(H)+4.25}+5.5\right\rfloor\leq
\left\lfloor\sqrt{2\Delta(G)+4.25}+5.5\right\rfloor.\] 
The vertices in \(N_G(v)\) forbid at most \(2\card{N(v)}\leq 4\) colors from use
on \(v\). Further, the vertices in \(N_G^2(v)\) forbid at most
\(\Delta(G)-1+(4-1)=\Delta(G)+2\) distinct 2-sets from use on \(v\). By
Lemma~\ref{lem: 2-tone coloring via 1st and 2nd neighborhoods} we can extend any \(2\)-tone \(k\)-coloring of \(H\) to \(G\) when
\[\binom{k-4}{2}> \Delta(G)+2.\]
This inequality is easy to verify when \(k=\left\lfloor\sqrt{2\Delta(G)+4.25}+5.5\right\rfloor\).
\end{proof}

Lemma~\ref{lem: mad thread lemma} is a structural result that we will
use to prove Theorem~\ref{thm: 2-tone coloring girth 12 graphs}.
As a special case, that theorem will exactly determine $\tau_2$
for planar graphs with sufficiently large girth and max degree.

We will also need some new definitions. 
A \(d^+\)-vertex,
\(d^-\)-vertex, or \(d\)-vertex is, respectively, a vertex of degree at least
\(d\), at most \(d\), and exactly \(d\).
An \emph{\(\ell\)-thread} in a graph \(G\) is a trail of length \(\ell + 1\) in \(G\)
whose \(\ell\) internal vertices have degree 2 in \(G\). We
refer to the non-internal vertices of an \(\ell\)-thread as \emph{endpoints}. So an
\(\ell\)-thread has two endpoints, not necessarily distinct. 
For  Lemma~\ref{lem: mad thread lemma} and Theorem~\ref{thm: 2-tone coloring
girth 12 graphs} we present the proofs as if each \(\ell\)-thread has two
distinct endpoints, but all arguments remain valid if the endpoints are not
distinct.

\begin{lemma}
\label{lem:thread}
\label{lem: mad thread lemma}
Let \(G\) be a graph with \(\delta(G)\geq 2\). If \(\mad(G)< 12/5\), then \(G\) contains at least one of the following:
\begin{itemize}
    \item[(a)] a \(4\)-thread,
    \item[(b)] a \(3\)-thread with a \(5^-\)-vertex as an endpoint, or
    \item[(c)] a \(2\)-thread with a \(3^-\)-vertex and a \(5^-\)-vertex as endpoints.
\end{itemize} 
\end{lemma}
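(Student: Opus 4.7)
My plan is a standard discharging argument. Assume for contradiction that $G$ has $\delta(G)\geq 2$, $\mad(G)<12/5$, and avoids all three configurations (a), (b), (c); we may pass to a single connected component, and further assume $G$ has at least one vertex of degree $\geq 3$, since otherwise $G$ is a cycle $C_n$ for which either (a) (when $n\geq 5$) or (c) (when $n\in\{3,4\}$, interpreting the two endpoints of the thread as coinciding) is immediate. Assign every $v\in V(G)$ the initial charge $\mu(v)=d(v)-12/5$, so $\sum_v\mu(v)=2|E(G)|-(12/5)|V(G)|<0$ by hypothesis. The goal is to redistribute charge so that every final charge is nonnegative, contradicting the negative total.

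The rule I would use sends, from each $3^+$-vertex $v$ to each of its $d(v)$ incident maximal threads, a fixed amount $s(d(v))$, where $s(3)=1/5$, $s(4)=s(5)=2/5$, and $s(d)=3/5$ for $d\geq 6$. Each thread then redistributes the received charge evenly among its internal $2$-vertices. The weights are calibrated so that $d(v)\cdot s(d(v))\leq d(v)-12/5$ with equality at $d(v)\in\{3,4,6\}$ and slack at $d(v)=5$ or $d(v)\geq 7$, keeping the $3^+$-vertex side of the accounting nonnegative.

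For the thread side, I use the forbidden configurations to restrict endpoint degrees. The absence of (a) forces $\ell\leq 3$ internal $2$-vertices on every maximal thread, and each such $2$-vertex ends with $(s(d(x))+s(d(y)))/\ell-2/5$, so it suffices to verify $s(d(x))+s(d(y))\geq 2\ell/5$. When $\ell=1$, both endpoints are $3^+$-vertices, so $s+s'\geq 2/5$. When $\ell=3$, the absence of (b) forces both endpoints to have degree $\geq 6$, giving $s+s'=6/5$. When $\ell=2$, the absence of (c) rules out the case $\min\{d(x),d(y)\}\leq 3$ with $\max\{d(x),d(y)\}\leq 5$, leaving either both endpoints of degree $\geq 4$ (so $s+s'\geq 4/5$) or one endpoint of degree $\geq 6$ (so $s+s'\geq 3/5+1/5=4/5$). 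Every case meets the threshold, producing the desired contradiction.

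The main obstacle is choosing $s$ to simultaneously balance the $3^+$-vertex outflow and meet each $2$-thread's requirement. The binding constraint is the $2$-thread with a $3$-vertex endpoint: avoiding (c) is precisely what forces the other endpoint to be a $6^+$-vertex, supplying the extra $3/5$ needed alongside the $1/5$ from the $3$-vertex. Any weaker penalty on small-degree endpoints would make this case fail, which is what locks in the particular threshold $12/5$ appearing in the lemma.
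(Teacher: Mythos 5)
Your proof is correct and follows essentially the same discharging argument as the paper: $3^+$-vertices send $1/5$, $2/5$, or $3/5$ to each incident maximal thread according to the degree thresholds $3$, $4$, $6$, and the absence of configurations (a)--(c) guarantees each thread's internal $2$-vertices are covered, after first disposing of $2$-regular components. The only cosmetic differences are that you normalize the initial charge to $d(v)-12/5$ (the paper keeps charge $d(v)$ and shows everything averages to at least $12/5$) and use flat transfer amounts where the paper transfers exactly $1-12/(5d(v))$ before bounding by the same values.
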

\begin{proof}
Let \(G\) be a graph with \(\delta(G)\geq 2\) and \(\mad(G)< 12/5\). 
Assume for contradiction that \(G\) has no threads of type (a), (b), and (c). 
If $G$ contains a 2-regular component, then it contains an instance of (c); so
assume no component of $G$ is 2-regular.  Thus, every 2-vertex appears in a
unique maximal thread, and the endpoints of that thread are $3^+$-vertices.
We give each vertex \(v\) initial charge \(d(v)\). To redistribute charge,
each maximal thread 
takes charge \(1-12/(5d(v))\) from each of its endpoints. 
Since \(G\) has no \(4\)-thread, each maximal
thread has at most \(3\) internal vertices. 
If a thread $t$ has a vertex $v$ as an endpoint, then the charge that $t$
receives from $v$ is: $1-12/(3(5))=1/5$ if $d(v)=3$; and $\ge1-12/(4(5))=2/5$ if
$d(v)\ge 4$; and $\ge1-12/(6(5))=3/5$ if $d(v)\ge 6$.

Each \(1\)-thread gains at least \(1/5\) 
from each endpoint, 
so finishes with at least $12/5$.

Each \(2\)-thread cannot be an instance of (c), so either (i) both of its
endpoints are \(4^+\)-vertices or (ii) it has a \(6^+\)-vertex as an endpoint.
So a \(2\)-thread gains either (i) at least \(2/5\) from each endpoint or  (ii)
at least \(3/5\) from the endpoint that is a \(6^+\)-vertex and at least \(1/5\) from 
the other endpoint. Thus, each 2-thread finishes with at least
$2(2)+4/5=2(12/5)$.

Each \(3\)-thread has a \(6^+\)-vertex for each endpoint, otherwise \(G\)
contains (b). So a \(3\)-thread gains at least \(3/5\) from each endpoint. 
Thus, each 3-thread finish with at least $3(2)+6/5=3(12/5)$.
If \(v\) is an endpoint of a thread, then \(v\) sees at most \(d(v)\) threads. 
Thus, $v$ has final charge $d(v)-d(v)(1-12/(5d(v))=12/5$.
This implies that \(\overline{d}(G) \geq 12/5\); which contradicts the
hypothesis \(\mad(G)< 12/5\).
\end{proof}

\begin{theorem}
\label{thm:mad}
\label{thm: 2-tone coloring girth 12 graphs}
If \(G\) is a graph with \(\mad(G)< 12/5\), then
\(\tau_2(G)\le\max\left\{7,\left\lceil\sqrt{2\Delta(G)+0.25}+2.5\right\rceil\right\}\). 
Further, if \(G\) is planar with girth at least 12 and $\Delta(G)\geq
7$, then \(\tau_2(G)=\left\lceil\sqrt{2\Delta(G)+0.25}+2.5\right\rceil\).
\end{theorem}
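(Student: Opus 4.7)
The plan is to prove the upper bound by minimum counterexample on $|V(G)|$, then derive the equality in the second statement from Lemma~\ref{prop: 2-tone star lower bound}. Set $k:=\max\{7,\lceil\sqrt{2\Delta(G)+0.25}+2.5\rceil\}$ and let $G$ be a minimum counterexample. A short first step reduces to $\delta(G)\ge 2$: if $v$ has degree at most $1$, then $G-v$ inherits both $\mad<12/5$ and $\Delta\le\Delta(G)$, so by minimality $G-v$ is $2$-tone $k$-colorable, and the coloring extends to $v$ via Lemma~\ref{lem: 2-tone coloring via 1st and 2nd neighborhoods} using the algebraic fact $\binom{k-2}{2}\ge\Delta(G)$, which follows from $k\ge\sqrt{2\Delta(G)+0.25}+2.5$. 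With $\delta(G)\ge 2$ and $\mad(G)<12/5$, Lemma~\ref{lem:thread} then supplies a thread of type (a), (b), or (c); in each case I would delete its internal vertices to form $H$, color $H$ by minimality, and extend.

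Cases (a) and (b) yield to sequential extension. For the $4$-thread $v_0v_1v_2v_3v_4v_5$ of case (a), I would color in the order $v_1,v_4,v_2,v_3$: each of $v_1,v_4$ faces one colored thread-neighbor and at most $\Delta(G)-1$ colored distance-$2$ vertices (so $\binom{k-2}{2}\ge\Delta(G)$ suffices), while $v_2$ and $v_3$ each face at most $2$ colored distance-$2$ vertices. For the $3$-thread $v_0v_1v_2v_3v_4$ of case (b) with $d(v_0)\le 5$, I would color in the order $v_3,v_1,v_2$: the controlling inequalities are $\binom{k-2}{2}\ge\Delta(G)$ for $v_3$; $\binom{k-2}{2}>5$ for $v_1$ (since $d(v_0)\le 5$ caps its distance-$2$ count at $5$); and $\binom{k-4}{2}>2$ for $v_2$. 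All hold for $k\ge 7$.

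The main obstacle is case (c), a $2$-thread $v_0v_1v_2v_3$ with $d(v_0)\le 3$ and $d(v_3)\le 5$, in which naive sequential extension of $v_1$ after $v_2$ requires $\binom{k-4}{2}>3$, which is not quite satisfied at $k=7$. My fix is to color $v_1$ and $v_2$ \emph{simultaneously} and argue by counting. By maximalizing the thread (otherwise an extension lands in case (a) or (b)) I may assume $d(v_0)=3$; in the hardest subcase $f(v_0)=\{1,2\}$ and $f(v_3)=\{3,4\}$ are disjoint. Then I need $f(v_1)\in\binom{\{3,4,5,6,7\}}{2}\setminus\{\{3,4\}\}$ avoiding at most two further $2$-sets from $N(v_0)\setminus\{v_1\}$, and $f(v_2)\in\binom{\{1,2,5,6,7\}}{2}\setminus\{\{1,2\}\}$ avoiding at most four $2$-sets from $N(v_3)\setminus\{v_2\}$, with $f(v_1)\cap f(v_2)=\emptyset$. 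Among the six candidates $f(v_1)=\{x,y\}$ with $x\in\{3,4\},y\in\{5,6,7\}$, at least four survive the $N(v_0)$-constraints; and for any such $f(v_1)$, the candidates for $f(v_2)$ disjoint from it form a $\binom{4}{2}=6$-element family inside $\{1,2\}\cup(\{5,6,7\}\setminus\{y\})$, from which $\{1,2\}$ and the at most four $N(v_3)$-constraints exclude at most $5$, leaving at least one valid $f(v_2)$. The subcases $|f(v_0)\cap f(v_3)|\in\{1,2\}$ follow by similar (and easier) counts.

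For the second statement, Euler's formula gives $|E(G)|\le g(|V(G)|-2)/(g-2)$ for planar $G$ of girth $g$, so $\mad(G)<12/5$ when $g\ge 12$, and the first statement applies. The assumption $\Delta(G)\ge 7$ forces $\lceil\sqrt{2\Delta(G)+0.25}+2.5\rceil\ge 7$, so the upper bound becomes $\lceil\sqrt{2\Delta(G)+0.25}+2.5\rceil$; the matching lower bound is Lemma~\ref{prop: 2-tone star lower bound} applied to $K_{1,\Delta(G)}\subseteq G$.
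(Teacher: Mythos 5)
Your proposal is correct and runs on the same skeleton as the paper's proof: minimal counterexample, reduction to $\delta(G)\ge 2$, the three configurations of Lemma~\ref{lem:thread}, coloring a smaller graph by minimality and extending via Lemma~\ref{lem: 2-tone coloring via 1st and 2nd neighborhoods}, with the second statement obtained exactly as in the paper from the girth bound on $\mad$ and Lemma~\ref{prop: 2-tone star lower bound}. The genuine difference is in how the configurations are processed. You delete \emph{all} internal vertices of the thread, which guarantees that the coloring of $H$ induces a valid partial $2$-tone coloring of $G$ (every path of length at most $2$ through a deleted vertex has a deleted endpoint), whereas the paper keeps one internal vertex in the $2$-thread case, must then repair a possible clash $\varphi(v_2)=\varphi(x)$, and escapes the tight count at $k=7$ by recoloring $v_2$ and retrying. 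Your substitute for that recoloring dance --- choosing the labels of $v_1$ and $v_2$ simultaneously by counting --- is clean, and your worst-case count (disjoint $f(v_0),f(v_3)$ at $k=7$) is right: at least $6-2\ge 1$ mixed candidates for $f(v_1)$, and for any of them at least $6-5\ge 1$ candidates for $f(v_2)$. The orders you give for cases (a) and (b) also check out, since $k\ge\sqrt{2\Delta(G)+0.25}+2.5$ indeed gives $\binom{k-2}{2}\ge\Delta(G)$.

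One sentence needs more care: the claim that the subcases $\lvert f(v_0)\cap f(v_3)\rvert\in\{1,2\}$ follow by ``similar (and easier)'' counts. When $f(v_0)=f(v_3)$, both $f(v_1)$ and $f(v_2)$ must be drawn from the same $5$ remaining colors, so for a fixed admissible $f(v_1)$ only $\binom{3}{2}=3$ candidates remain for $f(v_2)$, and these can all be blocked by the up to $4$ labels on $N(v_3)\setminus\{v_2\}$; the sequential count from your disjoint case does not carry over verbatim. It is easily repaired --- count ordered disjoint pairs within the $5$ available colors: there are $\binom{5}{2}\binom{3}{2}=30$, of which at most $2\cdot 3$ are excluded by the constraints at $v_0$ and at most $4\cdot 3$ by those at $v_3$, leaving at least $12$ --- but as written that subcase is asserted rather than proved. (Two minor points: your case (c) computation implicitly fixes $k=7$, which is fine since you noted sequential extension works for $k\ge 8$; and, as the paper remarks, one should observe that all these arguments survive when the two thread endpoints coincide.)
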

\begin{proof}
The second statement follows from the first since a planar graph \(G\) with
girth at least 12 has \(\mad(G)<2(12)/(12-2)=12/5\) and 
Lemma~\ref{prop: 2-tone star lower bound} implies that if \(\Delta(G)\geq 7\), then \(\tau_2(G)\geq 7\). 
We now prove the first statement.

Suppose the theorem is false and let \(G\) be a counterexample minimizing
\(\card{V(G)}\). 
If there exists $v$ with $d(v)\le 1$, then by minimality $\tau_2(G-v)\le\max
\left\{7,\left\lceil\sqrt{2\Delta(G)+0.25}+2.5\right\rceil\right\}$. 
And by Lemma~\ref{lem: 2-tone coloring via 1st and 2nd neighborhoods} we get
\(\tau_2(G)\leq\max\left\{7,\left\lceil\sqrt{2\Delta(G)+0.25}+2.5\right\rceil\right\}\). 
Thus, we assume $\delta(G)\ge 2$.

By Lemma~\ref{lem: mad thread lemma} we know \(G\) contains configuration (a),
(b), or (c) in that lemma.  We will show that none of these configurations
can appear in our minimal counterexample $G$. To do so, we form a
subgraph \(H\) by deleting some vertices of \(G\), color \(H\) by
minimality, and extend our coloring of $H$ to the deleted vertices of \(G\), 
to contradict that \(G\) was a counterexample. 
Let $k_G=\max\left\{7,\left\lceil\sqrt{2\Delta(G)+0.25}+2.5\right\rceil\right\}$.
For an arbitrary subgraph $H$ of $G$ (which will be clear from context), 
let $k_H=\max\left\{7,\left\lceil\sqrt{2\Delta(H)+0.25}+2.5\right\rceil\right\}$.


\textbf{Case 1: $\bm{G}$ contains a 4-thread, as shown in
Figure~\ref{fig:4-thread reducible configuration}.} Form \(H\) from $G$ by
deleting \(v_2\) and \(v_3\). Note that \(\card{H}<\card{G}\) and
\(\Delta(H)\leq\Delta(G)\). By the minimality of \(G\),
we have $\tau_2(H)\le k_H\le k_G$.
Let \(\varphi\) be a
\(2\)-tone $k_G$-coloring of \(H\). 
By Lemma~\ref{lem: 2-tone coloring via 1st and 2nd neighborhoods},
since $k_G\ge 7$ we can extend \(\varphi\) to \(v_1\) followed by \(v_2\),
a contradiction.

\begin{figure}[H]
\centering
\begin{tikzpicture}
\node[uStyle, label={270:{\(x\)}}] (v0) at (0,0) {}; 
\foreach \i in {-1,1}{ 
\node[uStyle] (x\i) at (\i*140:1cm) {};\draw[thick,black] (x\i)--(v0) node[pos=0] (q\i) {};}
\draw[dotted,black,thick] (q1)--(q-1);
\begin{scope}[xshift=5cm] 
    \node[uStyle, label={270:{\(y\)}}] (v5) at (0,0) {};
    \foreach \i in {-1,1}{
    \node[uStyle] (y\i) at (\i*40:1cm) {};\draw[thick,black] (y\i)--(v5) node[pos=0] (r\i) {};}
    \draw[dotted,black,thick] (r1)--(r-1);
\end{scope}
\foreach \i in {1,2,3,4}{\node[uStyle, label={270:{\(v_{\i}\)}}] (v\i) at (\i,0) {};}
\foreach \i [evaluate=\i as \x using \i-1] in {1,2,3,4,5}{\draw[thick, black] (v\x)--(v\i);}
\end{tikzpicture}
\caption{A \(4\)-thread with endpoints $x$ and $y$.
\label{fig:4-thread reducible configuration}
} 
\end{figure}
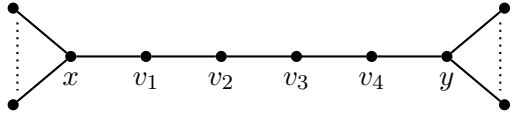

\textbf{Case 2: \(\bm{G}\) contains a 3-thread, as shown in Figure~\ref{fig:3-thread
reducible configuration}.} Form \(H\) by deleting \(v_2\) and \(v_3\). Note that
\(\card{H}<\card{G}\) and \(\Delta(H)\leq\Delta(G)\). By the minimality of \(G\),
we have $\tau_2(H)\le k_H\le k_G$.
Let \(\varphi\) be a
2-tone $k_G$-coloring of \(H\). 
By Lemma~\ref{lem: 2-tone coloring via 1st and 2nd neighborhoods},
since $k_G\ge 7$ we can extend \(\varphi\) to \(v_2\) followed by \(v_3\),
a contradiction.
In particular, since \(y\) forbids
2 colors from use on \(v_3\) and the vertices at distance 2 from \(v_3\)
forbid at most 4 distinct 2-sets from use on \(v_3\), since $k_G\ge 7$ we have
at least $\binom{5}2-5=5$ remaining 2-sets available for \(v_3\).
Afterwards, it is easy to color $v_2$.  This finishes the extension of $\vph$ to
a 2-tone $k_G$-coloring of $G$, which is a contradiction.


\begin{figure}[H]
\centering
\begin{tikzpicture}
\node[uStyle, label={270:{\(x\)}}] (v0) at (0,0) {}; 
\foreach \i in {-1,1}{ 
\node[uStyle] (x\i) at (\i*140:1cm) {};\draw[thick,black] (x\i)--(v0) node[pos=0] (q\i) {};}
\draw[dotted,black,thick] (q1)--(q-1);
\begin{scope}[xshift=4cm] 
    \node[uStyle, label={270:{\(y\)}}] (v4) at (0,0) {};
    \foreach \i in {0,1,2,3}{
    \node[uStyle] (y\i) at (\i*30-1.5*30:1cm) {};\draw[thick,black] (y\i)--(v4);}
\end{scope}
\foreach \i in {1,2,3}{\node[uStyle, label={270:{\(v_{\i}\)}}] (v\i) at (\i,0) {};}
\foreach \i [evaluate=\i as \x using \i-1] in {1,2,3,4}{\draw[thick, black] (v\x)--(v\i);}
\end{tikzpicture}
\caption{A \(3\)-thread with endpoints $x$ and $y$, where $d(y)\leq 5$.
\label{fig:3-thread reducible configuration}
}
\end{figure}
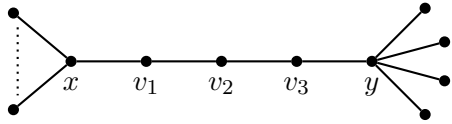

\textbf{Case 3: \(\bm{G}\) contains a 2-thread, as shown in
Figure~\ref{fig:2-thread reducible configuration}.} 
Form \(H\) from $G$ by deleting \(v_1\). Note that \(\card{H}<\card{G}\) and
\(\Delta(H)=\Delta(G)\). By the minimality of \(G\), we have 
$\tau_2(H)\leq k_H\le k_G$
Let \(\varphi\) be a \(2\)-tone $k_G$-coloring of \(H\). 
Note that \(\varphi\) might fail to induce a partial \(2\)-tone coloring of
\(G\) since it is possible that \(\varphi(v_2)=\varphi(x)\), which creates a
problem since \(d(v_2,x)=2\).
To avoid this issue we can simply re-color \(v_2\), since $d(v_2)=2$. In this
case, \(v_2\) is a leaf of \(H\), so its neighbor forbids \(2\) colors from use
on \(v_1\); furthermore, the vertices at distance 2 from \(v_2\) forbid at most
5 distinct 2-sets from use on \(v_1\). So we can recolor \(w\) with
another 2-set, since
$\binom{7-2}{2}> 4+1$; in fact, we have at least 4 choices of label for $v_2$.
Thus, we assume that \(\varphi\) induces a proper 2-tone coloring of \(G\).
Finally, we consider coloring $v_1$.
Its two neighbors forbid at most $2(2)=4$ colors.  And the three vertices at
distance two forbid an additional three 2-sets.  If $k_G\ge 8$, then we have a
2-set available to use on $v_1$.  So assume instead that $k_G=7$.
If no 2-sets are available to use on $v_1$, then the two 2-sets used on its
neighbors are disjoint.  Further, the three 2-sets used on vertices at distance
two are distinct, and they are all disjoint from the set of colors used on its
neighbors.  But now to escape this situation we can recolor $v_2$ with one of
the other 4 possible 2-sets we had to choose from.  Afterward, we can extend the
2-tone 7-coloring to $G$, a contradiction.

\begin{figure}[H]
\centering
\begin{tikzpicture}
\node[uStyle, label={270:{\(x\)}}] (v0) at (0,0) {}; 
\foreach \i in {-1,1}{ 
\node[uStyle] (x\i) at (180-\i*30:1cm) {};\draw[thick,black] (x\i)--(v0);}
\begin{scope}[xshift=3cm] 
    \node[uStyle, label={270:{\(y\)}}] (v3) at (0,0) {};
    \foreach \i in {0,1,2,3}{
    \node[uStyle] (y\i) at (\i*30-1.5*30:1cm) {};\draw[thick,black] (y\i)--(v3);}
\end{scope}
\foreach \i in {1,2}{\node[uStyle, label={270:{\(v_{\i}\)}}] (v\i) at (\i,0) {};}
\foreach \i [evaluate=\i as \x using \i-1] in {1,2,3}{\draw[thick, black] (v\x)--(v\i);}
\end{tikzpicture}
\caption{A \(2\)-thread with endpoints $x$ and $y$, where $d(x)=3$ and $d(y)\leq
5$.
\label{fig:2-thread reducible configuration}
}
\end{figure}
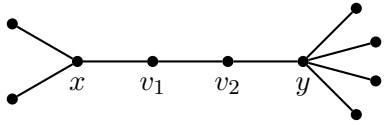
\aftermath
\end{proof}

We conclude this section with a few conjectures.

\begin{conj}
There exists a constant $C$ such that all planar $G$ satisfy
$\tau_2(G)\le \sqrt{3\Delta(G)}+C$.
\end{conj}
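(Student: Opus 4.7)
The plan is to refine the strategy of Theorem~\ref{thm:planar}, replacing the $2\Delta(G)+25$ term in the extension inequality with $\tfrac{3}{2}\Delta(G)+O(1)$; this would immediately yield the desired bound $\tau_2(G)\le\sqrt{3\Delta(G)}+C$. The construction $H_t$ of Lemma~\ref{lem:Ht} suggests this is the correct target: each degree-2 vertex $v$ in $H_t$ has $|N^2(v)|=3t=\tfrac{3}{2}\Delta(H_t)$, because the two high-degree neighbors of $v$ share $t-1$ common neighbors (the other middle vertices of the same $K_{2,t}$), and it is this overlap that drops the second-neighborhood count from the generic $2\Delta$ down to $\tfrac{3}{2}\Delta$.

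The argument should proceed by minimum counterexample, as throughout Section~\ref{planar-sec}. First, we would handle small $\Delta(G)$ directly via Lemma~\ref{lem: 2-tone constant Delta bound}, absorbing the case into the additive constant $C$. Next, we would identify a reducible configuration: a bounded-degree vertex $v$ together with a neighbor $w$ such that contracting $vw$ yields a planar graph $H$ with $\Delta(H)\le\Delta(G)$ and $|V(H)|<|V(G)|$, and for which the vertices of $N_G^2(v)$ use at most $\tfrac{3}{2}\Delta(G)+O(1)$ distinct 2-sets in any 2-tone coloring of $H$. Applying Lemma~\ref{lem: 2-tone coloring via 1st and 2nd neighborhoods} would then extend the coloring, since $\binom{k-O(1)}{2}>\tfrac{3}{2}\Delta(G)+O(1)$ is equivalent (up to constants) to $k\ge\sqrt{3\Delta(G)}+C$.

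The hardest step will be producing such a configuration. A straightforward application of Lemma~\ref{lem: planar graphs have low degree vertex with at most two high degree neighbors} is insufficient, because when $v$'s two high-degree neighbors $u_1,u_2$ have nearly disjoint neighborhoods, $|N_G^2(v)|$ can be close to $2\Delta(G)$. We would attempt a discharging argument, in the style of Lemma~\ref{lem:thread}, that either (i) produces a small-degree vertex with at most one high-degree neighbor, or (ii) produces a small-degree vertex whose two high-degree neighbors share a linear number of common neighbors (mimicking $H_t$ locally, so that the overlap in $N_G^2(v)$ can be counted), or (iii) exposes some other reducible substructure, perhaps involving a cluster of several small-degree vertices contracted simultaneously.

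The main obstacle, and presumably why this remains a conjecture, is that no discharging scheme currently in the literature is known to guarantee one of (i)--(iii) for every planar graph of large maximum degree; designing one appears to require weights tied simultaneously to vertex degree and to second-neighborhood overlap, and it may force a delicate multi-vertex reduction that preserves planarity and bounds $\Delta(H)$ while still shrinking $|V(H)|$. Verifying that the resulting case analysis is exhaustive, and that each case really admits an extension using only $\sqrt{3\Delta(G)}+C$ colors, is where we expect the proof to be most technical.
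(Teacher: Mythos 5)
The statement you are addressing is stated in the paper as a conjecture; the paper offers no proof of it, and your submission is not one either. What you have written is a research plan whose decisive step is explicitly left open: everything reduces to the existence, in every (minimal counterexample) planar graph of large maximum degree, of a reducible configuration for which the number of 2-sets forbidden on some low-degree vertex $v$ is at most $\tfrac{3}{2}\Delta(G)+O(1)$, so that Lemma~\ref{lem: 2-tone coloring via 1st and 2nd neighborhoods} yields an extension with $k\approx\sqrt{3\Delta(G)}$. You correctly diagnose why Lemma~\ref{lem: planar graphs have low degree vertex with at most two high degree neighbors} is not enough (a $5^-$-vertex whose two high-degree neighbors have nearly disjoint neighborhoods gives $\card{N^2(v)}\approx 2\Delta(G)$, which is exactly why Theorem~\ref{thm:planar} only reaches $\sqrt{4\Delta(G)}+O(1)$), but the proposed remedy --- a discharging argument guaranteeing one of your configurations (i)--(iii) --- is asserted as a hope, not proved, and you yourself concede no such scheme is known. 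Until that structural lemma is actually formulated and established (including checking that each configuration is genuinely reducible: planarity and $\Delta(H)\le\Delta(G)$ preserved, and the extension possible even after contraction may identify labels), there is no proof here.

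Beyond the missing lemma, be aware that the single-vertex greedy framework you inherit from Section~\ref{planar-sec} may be inherently too weak for this conjecture. If a minimal counterexample could be arranged so that every bounded-degree vertex has two high-degree neighbors with essentially disjoint second neighborhoods, then no vertex admits the $\tfrac{3}{2}\Delta(G)$ count you need, and one would have to exploit finer information than $\card{N^2(v)}$ --- for instance, that the 2-sets appearing around the two sides can be made to reuse colors, or a recoloring/global argument rather than a one-vertex extension. Your option (iii), ``some other reducible substructure, perhaps involving a cluster of several small-degree vertices contracted simultaneously,'' gestures at this but contains no content that could be checked. So the gap is not a technicality: it is the entire mechanism by which the coefficient $2$ in $2\Delta(G)+25$ from Theorem~\ref{thm:planar} would be driven down to $\tfrac{3}{2}$, and that is precisely what makes the statement a conjecture rather than a theorem.
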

Perhaps the following stronger statement holds.
It is essentially best possible, due to Lemma~\ref{lem:Ht}.

\begin{conj}
Every planar graph $G$ satisfies
$\tau_2(G)\le \left\lceil\sqrt{3\Delta(G)+30.25}+0.25\right\rceil$.
\end{conj}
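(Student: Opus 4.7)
The plan is to adapt the minimum-counterexample strategy of Theorem~\ref{thm:planar}, but driven by a sharper structural lemma. As before, one would take a minimal planar counterexample $G$, contract an edge $vw$ incident to a low-degree vertex $v$ to produce a smaller planar graph $H$ with $\Delta(H)\le\Delta(G)$, apply induction to obtain a $2$-tone $k$-coloring of $H$ for $k=\left\lceil\sqrt{3\Delta(G)+30.25}+0.25\right\rceil$, and extend to $G$ via Lemma~\ref{lem: 2-tone coloring via 1st and 2nd neighborhoods}. Since $\binom{k}{2}\sim\tfrac{3}{2}\Delta(G)$, for the extension to succeed with $d(v)$ bounded by an absolute constant we need a vertex $v$ with
\[
|N_G^2(v)|\le \tfrac{3}{2}\Delta(G)+O(1).
\]

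The principal obstacle is producing such a vertex. Lemma~\ref{lem: planar graphs have low degree vertex with at most two high degree neighbors} only yields $|N_G^2(v)|\le 2\Delta(G)+O(1)$, which is precisely why Theorem~\ref{thm:planar} gets the weaker leading constant $\sqrt{4\Delta}$. To strengthen this I would attempt a discharging argument tailored to penalize vertices whose neighbors' degrees sum to more than $\tfrac{3}{2}\Delta(G)$; the extremal example is the graph $H_t$ of Lemma~\ref{lem:Ht}, where a $2$-vertex has two neighbors of degree $\Delta$ and exactly $3\Delta/2$ second-neighbors. A reasonable statement to chase is a dichotomy: every planar graph with sufficiently large $\Delta$ contains either a bounded-degree vertex $v$ with $\sum_{u\in N(v)}(d(u)-1)\le\tfrac{3}{2}\Delta+O(1)$, or else an ``$H_t$-like'' subconfiguration consisting of a $K_{2,s}$ whose two high-degree poles have degree close to $\Delta$.

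The hardest case is expected to be exactly the $H_t$-like subconfiguration, where no single contraction reduction succeeds and the extension via Lemma~\ref{lem: 2-tone coloring via 1st and 2nd neighborhoods} fails on any one degree-$2$ vertex in isolation. As a backup I would handle this case separately, mirroring the upper-bound half of Lemma~\ref{lem:Ht}: delete all $s$ degree-$2$ vertices of the $K_{2,s}$ at once, $2$-tone color the remaining graph by induction, then re-insert the deleted vertices simultaneously, assigning them distinct $2$-sets disjoint from the two fixed $2$-sets used on the poles. Because the two poles together forbid only a fixed bounded number of $2$-sets, a counting comparison against $\binom{k}{2}$ with $k=\left\lceil\sqrt{3\Delta(G)+30.25}+0.25\right\rceil$ leaves enough room for $s\le\Delta/2$ such insertions. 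Combined with the structural dichotomy, this should close out the induction, with the constants $30.25$ and $0.25$ emerging from the routine inequality $\binom{k-c}{2}\ge\tfrac{3}{2}\Delta+O(1)$ calibrated to match $H_t$ after the outer ceiling.
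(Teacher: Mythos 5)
This statement is one of the paper's \emph{conjectures}; the paper offers no proof of it, and your proposal does not close the gap either --- it is a plan whose central ingredient is left unproven and is, as stated, false. The proposed dichotomy (``either a bounded-degree vertex $v$ with $\sum_{u\in N(v)}(d(u)-1)\le\tfrac32\Delta+O(1)$, or an $H_t$-like $K_{2,s}$ with both poles of degree close to $\Delta$'') fails on simple examples: take any $\Delta$-regular-ish planar graph and subdivide every edge once. Every vertex of bounded degree is then a $2$-vertex whose two neighbors are hubs of degree $\Delta$ sharing no other common neighbor, so $|N_G^2(v)|=2\Delta-2$, well above $\tfrac32\Delta+O(1)$, yet there is no $K_{2,s}$ with $s$ large. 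This is precisely the hard regime that forces the $\sqrt{4\Delta}$ coefficient in Theorem~\ref{thm:planar}: the greedy extension via Lemma~\ref{lem: 2-tone coloring via 1st and 2nd neighborhoods} counts up to $2\Delta+O(1)$ forbidden $2$-sets, and no single-vertex contraction argument can get below that without a genuinely new idea (e.g.\ showing that the $2$-sets on $N^2(v)$ must collide often, or recoloring/choosing many vertices simultaneously). Your sketch acknowledges the obstacle but supplies no mechanism to overcome it.

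The backup case is also miscounted. When you delete the $s$ degree-$2$ vertices of a $K_{2,s}$ and re-insert them, the vertices at distance $2$ from a re-inserted vertex $u$ are not just the other re-inserted vertices: they include \emph{all other neighbors of the two poles}, up to $2(\Delta-s)$ vertices outside the configuration, each forbidding a $2$-set on $u$. So the count is roughly $2\Delta-s$ forbidden $2$-sets, not ``a fixed bounded number,'' and this beats $\binom{k-4}{2}\approx\tfrac32\Delta$ only when $s$ is at least about $\Delta/2$ --- a strength of the configuration your dichotomy never guarantees (a $K_{2,s}$ with poles of degree near $\Delta$ can have $s=3$). Until you can prove a structural lemma that genuinely limits the number of \emph{distinct} $2$-sets forbidden on some reducible configuration to $\tfrac32\Delta+O(1)$, or find a different reduction altogether, the argument does not go through; this is exactly why the statement remains a conjecture in the paper.
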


We also believe that for planar graphs the girth requirement in
Theorem~\ref{thm:mad} can be significantly weakened.

\begin{conj}
There exists a constant $C$ such that every planar graph $G$ with girth at least
5 satisfies $\tau_2(G)\le \sqrt{3\Delta(G)}+C$.
\end{conj}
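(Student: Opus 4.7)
The plan is to adapt the minimum-counterexample framework of Theorem~\ref{thm:mad} to the sparser regime $\mad(G) < 10/3$, which by Euler's formula covers all planar graphs of girth at least $5$. Fix a sufficiently large constant $C$, set $k = \lceil\sqrt{3\Delta(G)}\rceil+C$, and let $G$ be a counterexample minimizing $|V(G)|$; standard reductions give $\delta(G)\geq 2$. The main tool remains Lemma~\ref{lem: 2-tone coloring via 1st and 2nd neighborhoods}: to extend a $2$-tone $k$-coloring to a deleted vertex $v$, one needs $\binom{k - 2d(v)}{2} > |N^2(v)|$, and since $G$ has girth at least $5$ we have $|N^2(v)| = \sum_{u \in N(v)}(d(u)-1)$ exactly.

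A direct calculation shows that with $k \sim \sqrt{3\Delta}+C$, one has $\binom{k-2d(v)}{2} \sim 3\Delta/2$ for any fixed $d(v)$, so extension succeeds precisely when $\sum_{u \in N(v)}d(u) \lesssim 3\Delta/2$. Thus the heart of the argument is a structural lemma, analogous to Lemma~\ref{lem:thread}, asserting that every planar graph with girth at least $5$ and $\delta \geq 2$ contains a $2$- or $3$-vertex whose neighbor-degree sum is at most roughly $3\Delta/2$ (or a long thread with one low-degree endpoint). I would try to prove this by discharging: assign each vertex initial charge $d(v) - 10/3$, transfer charge from high-degree vertices to $2$- and $3$-vertices along incident edges or through threads with a rule calibrated so that all vertices finish with non-negative charge, then derive a contradiction with $\sum d(v) < (10/3)|V|$. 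Given such a configuration, deletion produces a strictly smaller planar graph of girth at least $5$ with unchanged $\Delta(G)$, and Lemma~\ref{lem: 2-tone coloring via 1st and 2nd neighborhoods} extends the inductive coloring, contradicting minimality.

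The main obstacle is that the relaxation from $\mad(G) < 12/5$ to $\mad(G) < 10/3$ substantially shrinks the pool of obviously reducible configurations: a $3$-vertex whose three neighbors all have degree close to $\Delta(G)$ is not reducible by naive deletion (which only gives a $\sqrt{6\Delta}$-type bound), and contracting a short thread can spike the maximum degree above $\Delta(G)$. In such stubborn cases I would attempt a recoloring-in-the-local-neighborhood argument in the style of Case~3 of Theorem~\ref{thm:mad} (permuting labels at nearby $2$-vertices to free a legal $2$-set), or, as a probabilistic fallback, combine the discharging reduction with a Lov\'asz Local Lemma step applied to the induced subgraph on vertices of degree close to $\Delta(G)$; since $\mad(G) < 10/3$ forces this ``dense core'' to contain at most $O(|V(G)|/\Delta(G))$ vertices, such a hybrid approach should suffice to achieve the asserted $\sqrt{3\Delta(G)}+C$ bound.
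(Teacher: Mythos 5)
First, a point of context: this statement is one of the paper's \emph{conjectures} --- the authors offer no proof of it, and the analogous bound is open even without the girth hypothesis. So your proposal cannot be measured against a proof in the paper; it has to stand on its own, and as written it does not. What you have is a plan whose central ingredient --- the structural lemma asserting that every planar graph of girth at least $5$ with $\delta\ge 2$ contains a $2$- or $3$-vertex of neighbor-degree sum at most roughly $3\Delta(G)/2$, or a long thread with a low-degree endpoint --- is exactly the open crux, and you only say you ``would try to prove this by discharging.'' It is not at all clear such a lemma is true in the form you need. For instance, a $2$-vertex both of whose neighbors have degree close to $\Delta(G)$ (a $1$-thread between two hubs) has neighbor-degree sum close to $2\Delta(G)>3\Delta(G)/2$, and Lemma~\ref{lem: 2-tone coloring via 1st and 2nd neighborhoods} applied to it only yields $k\approx\sqrt{4\Delta(G)}$; girth $5$ and $\mad(G)<10/3$ do not obviously exclude such vertices (they only exclude graphs consisting \emph{entirely} of them), so your discharging must quantify how many there can be and you must supply a genuinely new reduction for them. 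Your two fallbacks do not fill this hole: the ``recolor a nearby $2$-vertex'' trick from Case~3 of Theorem~\ref{thm:mad} works there because only $O(1)$ labels are in conflict, whereas here the obstruction is $\Theta(\Delta)$ forbidden $2$-sets coming from the second neighborhood, which no local permutation of a single neighbor's label removes; and the Lov\'asz Local Lemma remark is a non sequitur --- the difficulty is not coloring the few high-degree vertices (the ``dense core''), it is coloring the low-degree vertices whose second neighborhoods are large, and knowing the core has $O(|V(G)|/\Delta(G))$ vertices says nothing about those constraints.

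Two smaller but real issues in the same vein: your extension step silently assumes that a $2$-tone coloring of $G-v$ induces a partial $2$-tone coloring of $G$, which can fail (two neighbors of $v$ at distance $2$ in $G$ may be far apart in $G-v$ and receive identical labels); the paper's proofs go out of their way to handle this by contracting edges or explicitly recoloring, and for the hard configurations above you have not said how you would. And even where your counting works, the constants matter: with $k=\sqrt{3\Delta(G)}+C$ you get $\binom{k-2d(v)}{2}=3\Delta(G)/2+O(\sqrt{\Delta(G)})$, so ``neighbor-degree sum $\lesssim 3\Delta(G)/2$'' must be proved with an error term no worse than $O(\sqrt{\Delta(G)})$, which a charge-based argument has to deliver explicitly. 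In short, the skeleton (minimal counterexample, $\mad<10/3$, Lemma~\ref{lem: 2-tone coloring via 1st and 2nd neighborhoods}, a discharging-produced reducible configuration) is the natural one and mirrors Theorem~\ref{thm:mad}, but the reducible-configuration lemma you need is unproven, plausibly needs new reductions beyond greedy extension for $1$-threads between two high-degree vertices and for $3$-vertices with three high-degree neighbors, and that is precisely why the statement remains a conjecture.
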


It is interesting to note the following.  For every integer $t\ge 2$ there
exists a girth $g_t$ and a maximum degree $\Delta_t$ such the maximum value of
$\tau_t(G)$, taken over all planar graphs $G$ with girth at least $g_t$ and
$\Delta(G)\ge \Delta_t$, is achieved by a tree.  Cranston, Kim, and
Kinnersley~\cite[Theorem~2]{ckk} showed that this maximum (for trees) is bounded by
$c_t\sqrt{\Delta(G)}$ for some constant $c_t$; and this is asymptotically sharp.
We briefly sketch the extension to planar graphs with sufficiently large girth
and maximum degree.  Following an approach similar to (but simpler than) the
proof of Lemma~\ref{lem:thread}, we can prove that if $G$ has sufficiently low maximum
average degree, then it contains either a $1^-$-vertex or a $3t$-thread.
Every $1^-$-vertex can be handled inductively (by coloring greedily).
For a $3t$-thread, we delete the middle $t$ vertices and color the smaller graph
by induction.  We choose $\Delta_t$ large enough that 
$\tau_t(K_{1,\Delta_t})\ge 3\tau_t(P_t)$.  (Recall that
$\tau_t(K_{1,\Delta_t})\ge \tau_2(K_{1,\Delta_t})\ge \sqrt{2\Delta_t}$, by
Lemma~\ref{prop:star}.) Now the number of colors forbidden on
all of the uncolored vertices (taken together) is at most $2\tau_t(P_t)$.
Thus, we have at least $\tau_t(P_t)$ colors that are available for use on all of
the uncolored vertices.  So we can extend the coloring.
\section{3-Tone, 4-Tone, and 5-Tone Coloring of Cycles}
\label{cycles-sec}
We can easily prove that 
$\tau_t(C_n)=O(t^{3/2})$, as follows.  Let $f(t):=\tau_t(P_t)$. 
By Lemma~\ref{prop: t-tone coloring paths}, there exists a constant $c$
such that $\tau_t(P_t)\le ct^{3/2}$ for all $t$.  Further,
$\tau_t(P_n)=\tau_t(P_t)$ for all $n\ge t$. 
Whenever \(n\geq 2t+2\), to prove \(\tau_t(C_n)\leq
2f(t)\) we simply color the first \(t+1\) vertices with one set of \(f(t)\)
colors and the remaining vertices with a disjoint set of \(f(t)\) colors. 
But is it true that $\tau_t(C_n)=\tau_t(P_n)$ for all $n$
sufficiently large (as a function of $t$)?
Bickle and Phillips~\cite[Theorem 18]{bp} showed that 
$\tau_2(C_n)=6$ when $n\in\{3,4,7\}$ and otherwise
\(\tau_2(C_n)=\tau_2(P_n)=5\).
We generalize their approach to prove analogous results for $\tau_3$, $\tau_4$,
and $\tau_5$.  Our next lemma plays a key role in these proofs.

\begin{lemma}
\label{lem: concatenating colored small cycles to color cycles}
Fix $t,k,n\in\ZZ^+$. Let $\C$ be a set of positive integers, each at least $t$.
If $n$ can be written as an integer linear combination of elements in $\C$ (with
nonnegative coefficients), then $\tau_t(C_n)\le k$ provided that the following
two properties hold:

\begin{itemize}
    \item[(1)] For each $\ell\in \C$, there exist a $t$-tone $k$-coloring
$\vph_{\ell}$ of $C_{\ell}$; and
\item[(2)] For each ordered pair $(\ell_1,\ell_2)\in \C\times\C$ (allowing
$\ell_1=\ell_2$), we get a $t$-tone $k$-coloring of $C_{2t}$ if we color its
first $t$ vertices as vertices $\ell_1-t+1,\ldots, \ell_1$ of $C_{\ell_1}$ under
$\vph_{\ell_1}$ and we colors its last $t$ vertices as vertices $1,\ldots, t$ of
$C_{\ell_2}$ under $\vph_{\ell_2}$.
\end{itemize}
\end{lemma}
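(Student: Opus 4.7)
Given the decomposition $n=\sum_{\ell\in\C}c_\ell\ell$, the plan is to cut $C_n$ into consecutive arcs $S_1,\dots,S_m$ (with $m=\sum_\ell c_\ell$) of lengths $\ell_1,\dots,\ell_m$ taken from $\C$ in some order, and to color each $S_j$ by identifying its vertices in order with vertices $1,\dots,\ell_j$ of $C_{\ell_j}$ and copying the labels from $\vph_{\ell_j}$. It remains to verify that the resulting labeling $f$ is a $t$-tone $k$-coloring of $C_n$.

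The key preliminary reduction is that every $t$-set has exactly $t$ elements, so $|f(u)\cap f(v)|\le t<d_{C_n}(u,v)$ whenever $d_{C_n}(u,v)\ge t+1$, and the constraint is automatic for such pairs. For $d_{C_n}(u,v)\le t$, the pair lies in a window of $t+1$ consecutive vertices in $C_n$. Because every $\ell_j\ge t$, such a window can cross at most one segment boundary: otherwise it would contain an entire segment plus one extra vertex on each side, forcing $\ell_j\le t-1$. Thus the pair lies either inside a single segment or across exactly one (cyclic) boundary; the case $m=1$ collapses to the first alternative.

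In the same-segment case $u,v\in S_j$, a shortest $C_n$-path stays inside $S_j$, so $d_{C_{\ell_j}}(u,v)\le d_{C_n}(u,v)$ (the $C_{\ell_j}$ wraparound can only shorten distances relative to the arc); hypothesis~(1) then gives $|f(u)\cap f(v)|<d_{C_{\ell_j}}(u,v)\le d_{C_n}(u,v)$. In the consecutive-segment case, orient indices so the shortest $C_n$-path runs through the $S_j$-$S_{j+1}$ boundary, with $u\in S_j$ at position $p\in\{\ell_j-t+1,\dots,\ell_j\}$ of $C_{\ell_j}$ and $v\in S_{j+1}$ at position $q\in\{1,\dots,t\}$ of $C_{\ell_{j+1}}$; these are exactly the $2t$ vertices to which hypothesis~(2) assigns a $t$-tone $k$-coloring of $C_{2t}$ for the ordered pair $(\ell_j,\ell_{j+1})$.

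The main technical step---essentially bookkeeping---is comparing distances on this $2t$-window. Writing $A=\ell_j-p+q$ for the ``forward'' distance through the boundary (which is the same in $C_{2t}$ and $C_n$), the ``backward'' alternatives are $B_{2t}=2t-A$ in $C_{2t}$ and $B_n=n-A$ in $C_n$. Since $m\ge 2$ forces $n\ge 2t$, we have $B_n\ge B_{2t}$, and therefore $d_{C_n}(u,v)=\min(A,B_n)\ge\min(A,B_{2t})=d_{C_{2t}}(u,v)$. Hypothesis~(2) now yields $|f(u)\cap f(v)|<d_{C_{2t}}(u,v)\le d_{C_n}(u,v)$, as required. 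The cyclic boundary $(S_m,S_1)$ is handled identically, using hypothesis~(2) on the ordered pair $(\ell_m,\ell_1)$. This exhausts all pairs and completes the proof.
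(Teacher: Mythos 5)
Your proposal is correct and constructs exactly the coloring the paper uses: partition $C_n$ into consecutive arcs with lengths in $\C$, color each arc by its $\vph_\ell$, and then use hypothesis (1) for pairs inside an arc and hypothesis (2) for pairs straddling a boundary (noting that only pairs at distance at most $t$ matter, and such a pair can straddle at most one boundary since each arc has at least $t$ vertices). The only difference is presentational: the paper wraps the same construction in an induction on the number of summands, inserting one $\ell$-vertex path at a time, while you verify the full coloring directly and your explicit window/distance bookkeeping supplies details the paper leaves implicit.
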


\begin{proof}
Fix $t$, $k$, and $\C$ satisfying the hypotheses.  We prove the stronger
statement that if $n$ satisfies the hypotheses, then $C_n$ has a $t$-tone
$k$-coloring in which its vertices are partitioned into copies of $P_{\ell_i}$,
with each $\ell_i\in \C$, and each copy of $P_{\ell_i}$ colored by
$\vph_{\ell_i}$.  Our proof is by induction on the sum of the coefficients in
the integer linear combination representation of $n$.  

Assume, by symmetry, that $\ell_1$ has a positive coefficient, and let
$n':=n-\ell_1$.  By hypothesis, we have the desired $t$-tone $k$-coloring
$\vph_{n'}$ of $C_{n'}$.  We insert a path on $\ell_1$ vertices between the
``first'' and ``last'' vertex of the cycle $C_{n'}$ to get $C_n$.  Note that
$\vph_{n'}$ induces a partial $t$-tone $k$-coloring of $C_n$, with these
$\ell_1$ successive vertices uncolored.  To extend this partial coloring, we
color the uncolored vertices using
$\vph_{\ell_1}$.  By properties (1) and (2), this yields a $t$-tone $k$-coloring
of $C_n$, as desired.
%
%
\end{proof}

Note that Property (2) holds trivially if each $t$-tone coloring $\vph_{\ell_i}$
agrees on (is identical on) its first $t$ vertices. For example, in Figure~\ref{fig: t-tone coloring cycles example}, we can use Lemma~\ref{lem: concatenating colored small cycles to color cycles} with \(\C =\{4,5\}\) to show \(\tau_3(C_{13})\leq 10\) since \(13=2(4)+1(5)\) and the \(3\)-tone \(10\)-colorings of \(C_4\) and \(C_5\) agree in the first \(3\) vertices. 

\begin{figure}[H]
\centering
\begin{tikzpicture}[thick]
    \foreach \i in {0,1,2,3}{
        \node[uStyle,fill=black,draw=black,label={45+90*\i:
        \ifnum \i=0 \(\{4,5,6\}\) \fi
        \ifnum \i=1 \(\{1,2,3\}\) \fi
        \ifnum \i=2 \(\{4,9,10\}\) \fi
        \ifnum \i=3 \(\{1,7,8\}\) \fi
        }] (v\i) at (45+90*\i:1cm) {};
    }
    \foreach \i/\j in {0/1,1/2,2/3,3/0}{
        \draw[thick, black, decorate, decoration={zigzag, segment length=1mm, amplitude=0.5mm}] (v\i) -- (v\j);
    }
    \begin{scope}[yshift=-3.5cm]
        \foreach \i in {0,1,2,3,4}{ 
            \node[uStyle,fill=black, draw=black, label={18+72*\i:
                \ifnum \i=0 \(\{1,7,8\}\) \fi
                \ifnum \i=1 \(\{4,5,6\}\) \fi
                \ifnum \i=2 \(\{1,2,3\}\) \fi
                \ifnum \i=3 \(\{5,7,10\}\) \fi
                \ifnum \i=4 \(\{4,2,9\}\) \fi
                }] (w\i) at (18+72*\i:1cm) {};}
        \foreach \i/\j in {0/1,1/2,2/3,3/4,4/0}{
                \draw[thick, black, decorate, decoration={snake,segment length=3.4mm}] (w\i) -- (w\j); 
        }
    \end{scope}
    \begin{scope}[yshift=-2cm, xshift=7.7cm]
        \foreach \i/\j in {0/0,1/1,2/2,3/3,4/4,5/7.2,6/6.2,7/5.2,8/4.2,9/3,10/2,11/1,12/0}{
            \ifnum \i=0 
                \node[uStyle,fill=black,draw=black, label={\i*360/13:\(\{5,7,10\}\)}] (u\i) at (34.61+\i*360/13:2.25cm) {};
            \else 
                \ifnum \i<5 \node[uStyle, fill=black,draw=black, label={34.61+\i*360/13:
                    \ifnum \i=1 \(\{4,2,9\}\) \fi
                    \ifnum \i=2 \(\{1,7,8\}\) \fi
                    \ifnum \i=3 \(\{4,5,6\}\) \fi
                    \ifnum \i=4 \(\{1,2,3\}\) \fi
                }] (u\i) at (34.61+\i*360/13:2.25cm) {};
                \else \node[uStyle, fill=black, draw=black, label={-\j*25:
                    \ifnum \i=5 \(\{4,9,10\}\) \fi
                    \ifnum \i=6 \(\{1,7,8\}\) \fi
                    \ifnum \i=7 \(\{4,5,6\}\) \fi
                    \ifnum \i=8 \(\{1,2,3\}\) \fi
                    \ifnum \i=9 \(\{4,9,10\}\) \fi
                    \ifnum \i=10 \(\{1,7,8\}\) \fi
                    \ifnum \i=11 \(\{4,5,6\}\) \fi
                    \ifnum \i=12 \(\{1,2,3\}\) \fi
                }] (u\i) at (-\j*25:2.25cm) {};
                \fi
            \fi
        }
        \foreach \i/\j in {0/1,1/2,2/3,3/4}{\draw[thick,black, decorate, decoration={snake}] (u\i) -- (u\j);}
        \foreach \i/\j in {4/5,8/9,12/0}{\draw[black] (u\i) -- (u\j);}
        \foreach \i/\j in {5/6,6/7,7/8,9/10,10/11,11/12}{\draw[thick, black, decorate, decoration={zigzag, segment length=1mm, amplitude=0.5mm}] (u\i) -- (u\j);}
    \end{scope}
    
\end{tikzpicture}
\caption{Using \(3\)-tone \(10\)-colorings of \(C_4\) and \(C_5\) to show \(\tau_3(C_{13})\leq 10\).}
\label{fig: t-tone coloring cycles example}
\end{figure}
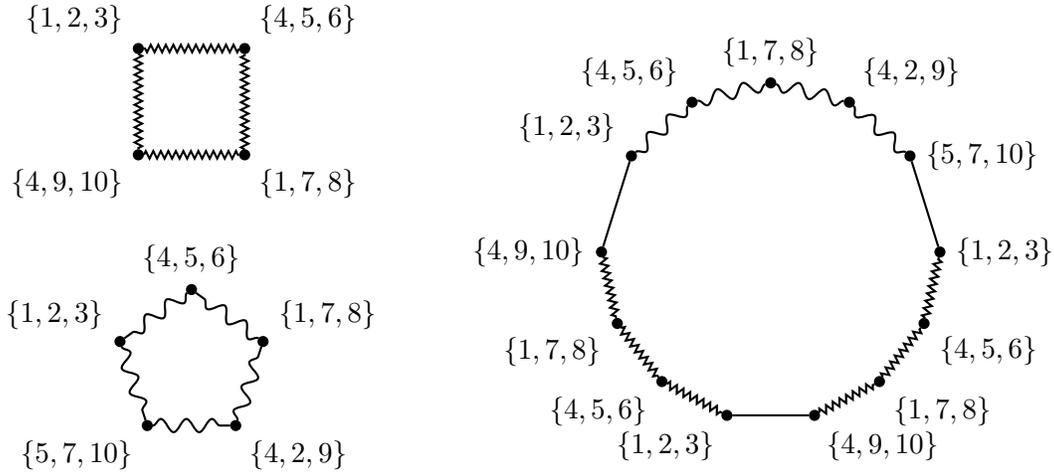
We use Lemma~\ref{lem: concatenating colored small cycles to color cycles} to
prove our next three theorems, which show that $\tau_t(C_n)=\tau_t(P_n)$ for
all $t\in \{3,4,5\}$, for all but a small (finite) number of values of $n$.

\begin{theorem}
\label{thm: 3-tone coloring cycles}
\[
\tau_3(C_n)=
\left\{\begin{tabular}{rl} 
\(10\)& if \(n\in\{4,5\}\)\\ 
\(9\)& if \(n\in\{3,7,10,13\}\)\\ 
\(8\)& \text{otherwise}\\ 
\end{tabular}\right.\]
\end{theorem}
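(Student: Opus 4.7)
The plan is to prove matching upper and lower bounds on $\tau_3(C_n)$ for each $n\ge 3$, handling the six exceptional values by hand and the remaining (infinitely many) $n$ uniformly via Lemma~\ref{lem: concatenating colored small cycles to color cycles}.

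For the lower bounds, the baseline $\tau_3(C_n)\ge 8$ for all $n\ge 3$ follows from $\tau_3(C_n)\ge \tau_3(P_n)=3+3+2=8$ via Lemmas~\ref{prop: t-tone lower bound from subgraphs} and~\ref{prop: t-tone coloring paths}. The stronger lower bounds for the small exceptional values come from short counting arguments: $\tau_3(C_3)\ge 9$ because $K_3$ needs three pairwise disjoint $3$-sets; $\tau_3(C_4)\ge 10$ by Lemma~\ref{prop: t-tone coloring number of C_4}; and $\tau_3(C_5)\ge 10$ by the following observation: any color used on two vertices of $C_5$ must appear on a pair at distance $2$ (distance-$1$ pairs get disjoint labels), such a pair can share only one color, and no color can appear on three vertices (the independence number of $C_5$ is $2$). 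Hence if $a$ is the number of doubly-used colors then $a\le 5$, and the total number of colors is $15-a\ge 10$. For $n\in\{7,10,13\}$ we need $\tau_3(C_n)\ge 9$: these are established by integer linear programs that search for a putative $3$-tone $8$-coloring of $C_n$ and certify infeasibility.

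For the upper bounds, the six exceptional values are handled by exhibiting explicit colorings: three pairwise disjoint $3$-sets for $C_3$; a $10$-coloring of $C_4$ (as in the proof of Lemma~\ref{prop: t-tone coloring number of C_4}); an explicit $10$-coloring of $C_5$; and explicit $3$-tone $9$-colorings of $C_7$, $C_{10}$, $C_{13}$ (the coloring of $C_{13}$ can be sought by modifying the $10$-coloring constructed from $\vph_{C_4}$ and $\vph_{C_5}$ in Figure~\ref{fig: t-tone coloring cycles example}). For all other $n$, apply Lemma~\ref{lem: concatenating colored small cycles to color cycles} with $\C=\{6,8,9,11\}$. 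The key arithmetic check is that every integer $n\ge 3$ with $n\notin\{3,4,5,7,10,13\}$ is expressible as a nonnegative integer combination of $\{6,8,9,11\}$: verify this directly for $n\in\{6,8,9,11,12,14,15,16,17,18,19\}$, then note that once six consecutive integers are representable, adding copies of $6$ covers every larger value. We then exhibit explicit $3$-tone $8$-colorings $\vph_6,\vph_8,\vph_9,\vph_{11}$ of $C_6,C_8,C_9,C_{11}$ that share a common coloring on their first three vertices, so that Property~(2) of Lemma~\ref{lem: concatenating colored small cycles to color cycles} holds automatically.

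The main obstacles are (i) proving the three sharper lower bounds $\tau_3(C_7),\tau_3(C_{10}),\tau_3(C_{13})\ge 9$ and (ii) producing the compatible $3$-tone $8$-colorings of $C_6,C_8,C_9,C_{11}$ with matching prefixes. Part~(i) resists a clean counting argument (unlike $C_5$, since the diameters of these larger cycles allow colors to appear on three independent vertices), and is most reliably handled by solving the associated feasibility ILPs, as the introduction suggests. Part~(ii) is a concrete combinatorial design problem: fix a common prefix of three $3$-sets (say $\{1,2,3\},\{4,5,6\},\{7,8,1\}$ or some similar triple satisfying the $t$-tone constraints), then search for admissible completions of the four required lengths. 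Once both ingredients are in hand, the rest of the proof is bookkeeping: split each non-exceptional $n$ according to the arithmetic decomposition and invoke Lemma~\ref{lem: concatenating colored small cycles to color cycles}.
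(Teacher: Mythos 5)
Your proposal follows essentially the same route as the paper's proof: the baseline lower bound $\tau_3(C_n)\ge\tau_3(P_3)=8$, separate treatment of the exceptional lengths, and Lemma~\ref{lem: concatenating colored small cycles to color cycles} applied with $\C=\{6,8,9,11\}$, using colorings that agree on their first three vertices, together with the observation that every $n\ge 3$ outside $\{3,4,5,7,10,13\}$ is a nonnegative integer combination of $6,8,9,11$. Your direct lower-bound arguments for $C_3$, $C_4$, $C_5$ are correct (the paper simply cites~\cite{w} for $n\in\{3,4,5,7\}$). The one substantive divergence is your claim that the bounds $\tau_3(C_{10})\ge 9$ and $\tau_3(C_{13})\ge 9$ resist a clean counting argument and must be delegated to an ILP. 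That is not so: the paper argues as follows. Suppose $\vph$ is a 3-tone 8-coloring of $C_n$ with $n\in\{10,13\}$, let $c_i$ be the number of vertices receiving color $i$, and set $s:=(n-1)/3$. A color used on $c_i$ vertices of $C_n$ repeats on at least $2(c_i-s)$ pairs of vertices at distance 2; summing over the 8 colors and using $\sum_i c_i=3n=9s+3$ gives at least $2s+6$ such repetitions, while $C_n$ has only $n=3s+1$ distance-2 pairs and each can carry at most one repeated color. Since $2s+6>3s+1$ when $s\in\{3,4\}$, some distance-2 pair shares two colors, a contradiction. An ILP infeasibility check would also suffice (and matches what the introduction advertises), but it is heavier machinery and leaves nothing human-verifiable in the writeup; for $n=7$ you can likewise just cite~\cite{w}.

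The other point to flag is that, as written, your upper bounds rest on colorings you have not produced: the mutually compatible 3-tone 8-colorings $\vph_6,\vph_8,\vph_9,\vph_{11}$ and the 3-tone 9-colorings of $C_{10}$ and $C_{13}$ are only asserted to be findable by search. They do exist --- the paper lists them explicitly (for instance $\vph_6$ is $-123-456-178-234-156-478-$, and its 9-colorings of $C_{10}$ and $C_{13}$ share the prefix $-123-456-178-369-458-279-368-$) --- so this is a completable gap rather than a flaw in the plan, but the explicit labelings, or a verification of them, must appear for the proof to be complete.
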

\begin{proof}
It is easy to check that $\tau_3(P_3)=8$.  So $\tau_3(C_n)\ge \tau_3(P_3)=8$ for
all $n\ge 3$.
Pan and Wu~\cite{w} showed that \(\tau_3(C_n)=9\) when \(n\in\{3,7\}\) and
that \(\tau_3(C_n)=10\) when \(n\in\{4,5\}\). 
So we assume below that $n=6$ or $n\ge 8$.  The case $n\in \{10,13\}$ is
exceptional, so we defer it briefly to handle the general case.
In Lemma~\ref{lem: concatenating colored small cycles to color cycles},
we let \(\mathcal{C}=\{6, 8, 9, 11\}\) 
and take \(\varphi_{k}\) as described below.
\[\begin{tabular}{ll}
\(\varphi_{6}\):& \(-123-456-178-234-156-478-\)\\ 
\(\varphi_{8}\):& \(-123-456-178-234-568-127-345-678-\)\\
\(\varphi_{9}\):& \(-123-456-178-234-568-174-238-156-478-\)\\
\(\varphi_{{11}}\):& \(-123-456-178-234-568-127-634-578-126-345-678-\)\\
\end{tabular}\]

So it remains to show that $n$ can be written as an integer linear combination
of elements of $\C$ whenever $n\ge 3$ and $n\notin\{3,4,5,7,10,13\}$.
To see this, we consider the integer linear combinations, $6, 8, 9, 11, 6+6, 6+8, 6+9,
8+8, 8+9, 9+9, 8+11$ and note that every larger integer can be written as one
of the final 6, plus some multiple of 6.

Now assume $n\in \{10,13\}$.  To see that $\tau_3(C_n)\le 9$, consider the
two following 3-tone 9-colorings.
\begin{alignat*}{2}
\text{3-tone 9-coloring of $C_{10}$}: &-123-456-178-369-458-279-368-245-169-578-\\
\text{3-tone 9-coloring of $C_{13}$}: &-123-456-178-369-458-279-368-459-\\
&-278-369-245-168-579-
\end{alignat*}

Finally, we show that $\tau_3(C_n)>8$.  Assume the contrary, let $\vph$ be a
3-tone 8-coloring of $C_n$, and let $c_i$ denote the number of vertices
receiving color $i$ under $\vph$ for each $i\in[8]$.  Let
$s:=(n-1)/3$.  It is straightforward to check that, for at least $(c_i-s)2$
pairs of vertices at distance 2, both vertices receive color $i$.  Note that
$\sum_{i=1}^8c_i=3n=9s+3$.  Further, $\sum_{i=1}^8(c_i-s)2=18s+6-16s=2s+6$.
Observe that $C_n$ has precisely $n=3s+1$ pairs of vertices at distance 2. 
Since $n\in\{10,13\}$, we have $s\in\{3,4\}$, so $2s+6>3s+1$.  Thus, by
Pigeonhole some pair of vertices at distance 2 receive two common colors under
$\vph$, a contradiction.
%
\end{proof}

\begin{theorem}
\label{thm: 4-tone coloring cycles}
\[\tau_4(C_n)=\left\{\begin{tabular}{cl} 
\(15\)& if \(n=5\)\\ 
\(14\)& if \(n=4\)\\ 
\(13\)& if \(n=7\)\\ 
\(12\)& \text{otherwise}\\ 
\end{tabular}\right.\]
\end{theorem}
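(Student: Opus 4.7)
The plan is to mirror closely the structure of the preceding Theorem on $\tau_3(C_n)$, splitting the argument into lower and upper bounds. For the general lower bound, note that by Lemma~\ref{prop: t-tone lower bound from subgraphs} together with Lemma~\ref{prop: t-tone coloring paths}, we have $\tau_4(C_n)\ge\tau_4(P_4)=4+4+3+1=12$ for all $n\ge 4$; and $\tau_4(C_3)=12$ follows immediately because $C_3=K_3$, forcing the three 4-sets to be pairwise disjoint. For the exceptional cases, Lemma~\ref{prop: t-tone coloring number of C_4} gives $\tau_4(C_4)=4(4)-2=14$ directly. For $\tau_4(C_5)\ge 15$, I would use a short counting argument: in any $4$-tone $k$-coloring of $C_5$ the total color--vertex incidences equal $20$, and since $C_5$ has only $5$ pairs at distance $2$ and no pair at distance $\ge 3$, at most $5$ pairs of vertices can share a color; but if $k\le 14$ then convexity forces $\sum_i\binom{c_i}{2}\ge 6$, a contradiction. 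The bound $\tau_4(C_7)\ge 13$ is stronger than anything this simple counting provides, so as advertised in the introduction I would invoke an integer linear program and note that its feasibility check rules out a $4$-tone $12$-coloring of $C_7$.

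For the upper bounds in the exceptional cases, I would exhibit explicit colorings: a $4$-tone $14$-coloring of $C_4$ (using the seven disjoint pairs from $[14]$ balanced across the two non-adjacent pairs), a $4$-tone $15$-coloring of $C_5$, and a $4$-tone $13$-coloring of $C_7$. These are short enough to write out in the pattern $-a_1a_2a_3a_4-b_1b_2b_3b_4-\cdots$ used in the $\tau_3(C_n)$ proof, and can be verified directly.

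For all remaining values of $n$, I would apply Lemma~\ref{lem: concatenating colored small cycles to color cycles} with a carefully chosen family $\C$. A natural choice is $\C=\{6,8,9,10,11,13\}$: every integer $n\ge 6$ with $n\ne 7$ can be written as a nonnegative integer combination of these (by verifying $n=6,8,9,10,11,12,13,14,\ldots,17$ by hand and adding multiples of $6$ thereafter), while $n=7$ remains excluded as required. For each $\ell\in\C$ I would exhibit an explicit $4$-tone $12$-coloring $\vph_\ell$ of $C_\ell$, taking care that all six colorings begin with the same four $4$-sets, say $\{1,2,3,4\}$-$\{5,6,7,8\}$-$\{9,10,11,12\}$-$\{1,2,5,6\}$ (or some other prefix whose pairwise distance-constraints on $C_{2t}=C_8$ are satisfied). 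By the remark after Lemma~\ref{lem: concatenating colored small cycles to color cycles}, the common prefix trivially ensures property~(2); property~(1) just reduces to checking the distance conditions within each $C_\ell$. The case $n=3$ is handled separately by assigning the three vertices disjoint $4$-sets from $[12]$.

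The principal obstacle is the pure bookkeeping of discovering the six cyclic colorings with a shared prefix of length $t=4$: unlike the $\tau_3$ case, the constraints now reach across distances $2$, $3$, and $4$, and the prefix of four $4$-sets must be compatible with how each $\vph_\ell$ wraps around. I expect this will require trial and error (or a short computer search) to find a prefix that extends to every $\ell\in\C$. A secondary obstacle is the lower bound $\tau_4(C_7)\ge 13$, which seems to fall outside the reach of straightforward double counting; formulating and solving the corresponding ILP, and reporting its output concisely, will be the technical lift needed there.
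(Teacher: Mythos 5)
Your outline is essentially the paper's own proof: lower bound $\tau_4(C_n)\ge\tau_4(P_n)=12$, separate treatment of $n\in\{3,4,5,7\}$, and for all other $n$ an application of Lemma~\ref{lem: concatenating colored small cycles to color cycles} with a family $\C$ whose elements cover all residues modulo $6$ and whose colorings share a common $4$-vertex prefix. The paper takes $\C=\{6,8,9,10,11,12,13\}$ (exhibiting $\vph_6,\vph_8,\vph_9,\vph_{10},\vph_{11},\vph_{13}$, with $12=6+6$), and for the exceptional values it simply cites Wu~\cite{w}; your self-contained arguments for $n=3$, $n=4$ (via Lemma~\ref{prop: t-tone coloring number of C_4}), and the counting argument for $\tau_4(C_5)\ge15$ are correct and are a modest improvement over bare citation.

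Two concrete gaps remain, though. First, the heart of the upper bound is precisely the explicit colorings you defer to ``trial and error or a short computer search'': the paper supplies them, all beginning with the prefix $\{1,2,3,4\},\{5,6,7,8\},\{1,9,10,11\},\{2,3,5,12\}$. Note that your sample prefix $\{1,2,3,4\},\{5,6,7,8\},\{9,10,11,12\},\{1,2,5,6\}$ is not even a partial $4$-tone coloring: the second and fourth vertices are at distance $2$ but share the two colors $5$ and $6$, and with only $12$ colors one cannot make the third set disjoint from the first two and still leave room to wrap around, which is why the paper's prefix reuses colors (within the allowed intersections) from the start. So the construction step is genuinely nontrivial and is not discharged by your proposal as written. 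Second, the lower bound $\tau_4(C_7)\ge13$ is not established: ``an ILP feasibility check rules it out'' is a promissory note with no formulation or output, and indeed the counting scheme that works for $C_5$ (and for $\tau_5(C_9)$ in the paper) does not close this case because distance-$3$ pairs in $C_7$ may share two colors. The paper sidesteps this by citing~\cite{w}; if you want a self-contained proof you must actually present the ILP (or an equivalent case analysis) and its infeasibility certificate.
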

\begin{proof}
We have \(\tau_4(C_n)\geq \tau_4(P_n)=12\). Using results from~\cite{w}, we
have \(\tau_4(C_3)=12\), \(\tau_4(C_4)=14\), \(\tau_4(C_5)=15\), and
\(\tau_4(C_7)=13\). 
We let \(\mathcal{C}=\{6, 8, 9, {10}, {11}, {12}, {13}\}\) and take \(\varphi_{k}\) as described below.
\[\begin{tabular}{ll}
\(\varphi_{{6}}\):& \(-1,2,3,4-5,6,7,8-1,9,10,11-2,3,5,12-4,6,7,9-8,10,11,12-\)\\ 
\(\varphi_{{8}}\):& \(-1,2,3,4-5,6,7,8-1,9,10,11-2,3,5,12-4,7,8,11-1,3,6,10-2,5,8,9\)\\
&\(-7,10,11,12-\)\\
\(\varphi_{{9}}\): & \(-1,2,3,4-5,6,7,8-1,9,10,11-2,3,5,12-4,7,8,11-3,6,9,10-1,4,5,12-\)\\ & \(-2,7,8,10-6,9,11,12-\)\\
\(\varphi_{{10}}\): &\(-1,2,3,4-5,6,7,8-1,9,10,11-2,3,5,12-4,7,8,11-6,9,10,12-1,3,5,11-\)\\ &\(-2,4,8,12-3,6,7,10-5,9,11,12\)\\
\(\varphi_{{11}}\):&\(-1,2,3,4-5,6,7,8-1,9,10,11-2,3,5,12-1,4,6,7-5,8,9,10-2,3,7,11-\)\\ &\(-4,6,8,12-1,3,5,10-2,6,7,9-8,10,11,12-\)\\
\(\varphi_{{13}}\): &\(-1,2,3,4-5,6,7,8-1,9,10,11-2,3,5,12-4,7,8,11-6,9,10,12-1,3,5,11-\)\\ &\(-2,7,8,12-4,9,10,11-3,5,6,12-1,2,8,11-4,6,7,10-5,9,11,12-\)
\end{tabular}\]
So it remains to show that $n$ can be written as an integer linear combination
of elements of $\C$ whenever $n\geq 3$ and $n\notin\{3,4,5,7\}$.
To see this, we consider the integer linear combinations, $6, 8, 9, 10, 11, 6+6,
13$ and note that every larger integer can be written as one
of the final 6, plus some multiple of 6.
\end{proof}

\begin{theorem}
\label{thm: 5-tone coloring cycles}
\[\tau_5(C_n)=\left\{\begin{tabular}{cl} 
\(20\)& if \(n=5\)\\ 
\(18\)& if $n\in\{4,6\}$\\ 
\(17\)& if $n\in\{7,9\}$\\ 
\(15\)& if \(n=3\)\\ 
\(16\)& \text{otherwise}\\ 
\end{tabular}\right.\]
\end{theorem}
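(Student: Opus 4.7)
The plan is to follow the template of Theorems~\ref{thm: 3-tone coloring cycles} and~\ref{thm: 4-tone coloring cycles}. The generic lower bound $\tau_5(C_n)\ge \tau_5(P_n)$ follows from Lemma~\ref{prop: t-tone lower bound from subgraphs}, and Lemma~\ref{prop: t-tone coloring paths} gives $\tau_5(P_n)=5+5+4+2=16$ for every $n\ge 5$. For $n=3$ every pair of vertices is adjacent, so the three labels must be pairwise disjoint and $\tau_5(C_3)=15$. Lemma~\ref{prop: t-tone coloring number of C_4} gives $\tau_5(C_4)=4(5)-2=18$ immediately.

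For each remaining exceptional value $n\in\{5,6,7,9\}$, I would quote the corresponding result from~\cite{w} when one is available and otherwise establish the lower bound via a short pigeonhole argument or an integer-linear-programming computation (the technique foreshadowed in the introduction). For instance, in $C_5$ only $5$ pairs of vertices are non-adjacent (all at distance $2$) and each can share at most one color, so at most $5$ colors can be reused; since the total label count is $5\cdot 5=25$, this forces $k\ge 25-5=20$. Each lower bound would be paired with an explicit $5$-tone coloring of $C_n$ that attains it.

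For the main case $n\ge 8$ with $n\ne 9$, the plan is to apply Lemma~\ref{lem: concatenating colored small cycles to color cycles} with $k=16$ and $\C=\{8,10,11,12,13,14,15,17\}$. For each $\ell\in\C$ I would construct an explicit $5$-tone $16$-coloring $\vph_{\ell}$ of $C_{\ell}$, arranging for all eight colorings to agree on the $5$-sets assigned to their first five vertices (via a fixed initial pattern analogous to those used in the preceding two theorems); this makes Property~(2) of the lemma automatic once Property~(1) is verified. To finish I would observe that $\C\cup\{8+8\}$ covers $\{8,10,11,\ldots,17\}\setminus\{9\}$, and every $n\ge 18$ can be written as $8+(n-8)$ with $n-8\ge 10$, so a straightforward induction on $n$ completes the arithmetic check.

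The main obstacle is producing the eight explicit colorings $\vph_{\ell}$. Each must simultaneously satisfy every distance-at-most-$5$ constraint and match the fixed five-vertex initial pattern, and the longest cases ($\ell\in\{15,17\}$) are tight enough that a short computer search is the natural tool, in the same spirit as the tables of colorings displayed in the proofs of Theorems~\ref{thm: 3-tone coloring cycles} and~\ref{thm: 4-tone coloring cycles}.
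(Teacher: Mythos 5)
Your plan is essentially the paper's proof: the same lower bound $\tau_5(C_n)\ge\tau_5(P_n)=16$, the same citations to~\cite{w} for $n\in\{3,4,5,6,7\}$, and the same use of Lemma~\ref{lem: concatenating colored small cycles to color cycles} with $\C=\{8,10,11,12,13,14,15,17\}$, with all colorings sharing a common initial pattern on the first five vertices so that Property~(2) reduces to checking Property~(1); your arithmetic covering of all $n\ge 8$ with $n\ne 9$ is also equivalent to the paper's.

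Two pieces are left as promissory notes, and they are exactly the content the paper supplies. First, the eight colorings $\vph_\ell$ (and the 5-tone 17-coloring of $C_9$) are exhibited explicitly in the paper; your proposed computer search is a legitimate substitute, but until those tables exist the proof is not complete. Second, and more substantively, the lower bound $\tau_5(C_9)\ge 17$ is not available from~\cite{w} (which only treats cycles of length less than eight), and it is not as light as your $C_5$ example suggests: the paper proves it by a double-counting argument over color multiplicities, letting $s_i$ count colors used on exactly $i$ vertices, bounding $3s_4+s_3'\le 9$ via distance-2 pairs and $s_3'+3s_3''\le 18$ via distance-3 pairs, and combining these with $\sum_i s_i=16$ and $\sum_i is_i=45$ to reach a contradiction. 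An exhaustive/ILP verification would also work since the problem is finite, but ``a short pigeonhole argument'' in the style of your $C_5$ computation does not suffice, so this step needs to be written out (or computed) rather than asserted.
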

\begin{proof}
We have \(\tau_5(C_n)\geq \tau_5(P_n)=16\) when \(n\geq 4\). Using results from~\cite{w}, we have \(\tau_5(C_3)=15\), \(\tau_5(C_4)=18\), \(\tau_5(C_5)=20\), \(\tau_5(C_6)=18\), and \(\tau_5(C_7)=17\). We let \(\mathcal{C}=\{8, 10, 11, 12, 13, 14, 15, 17\}\) and take \(\varphi_k\) as described below.
\[\begin{tabular}{cl}
\(\varphi_8\): & \(-1,2,3,4,5-6,7,8,9,10-1,11,12,13,14-6,2,3,15,16-4,5,9,10,14-\)\\ &\(-1,3,7,8,13-2,6,10,11,12-9,13,14,15,16-\)\\
\(\varphi_{10}\): & \(-1,2,3,4,5-6,7,8,9,10-1,11,12,13,14-6,2,3,15,16-4,5,9,10,14-\)\\ &\(-7,8,12,13,16-1,5,6,11,15-2,3,9,10,16-4,7,8,11,14-6,12,13,15,16-\)\\
\(\varphi_{11}\):& \(-1,2,3,4,5-6,7,8,9,10-1,11,12,13,14-6,2,3,15,16-7,8,4,5,11-\)\\
&\(-1,6,9,10,14-7,12,13,15,16-2,3,5,8,14-1,4,7,10,11-6,2,9,12,13-\)\\
&\(-8,11,14,15,16-\)\\
\(\varphi_{12}\):& \(-1,2,3,4,5-6,7,8,9,10-1,11,12,13,14-6,2,3,15,16-1,7,8,4,5-\)\\
&\(-6,11,12,9,10-1,2,3,13,14-6,7,8,15,16-1,11,12,4,5-6,2,3,9,10-\)\\
&\(-1,7,8,13,14-6,11,12,15,16-\)\\
\(\varphi_{13}\):& \(-1,2,3,4,5-6,7,8,9,10-1,11,12,13,14-6,2,3,15,16-4,5,9,10,13-\)\\ &\(-1,7,8,11,15-2,6,10,12,14-3,4,7,13,16-5,9,10,11,15-1,2,8,12,16-\)\\ &\(-4,5,6,7,14-3,8,10,11,13-9,12,14,15,16-\)\\
\(\varphi_{14}\):& \(-1,2,3,4,5-6,7,8,9,10-1,11,12,13,14-6,2,3,15,16-4,5,9,10,13-\)\\ &\(-1,7,8,11,15-2,6,10,12,14-3,4,7,13,16-5,9,10,11,15-1,2,8,12,16-\)\\ &\(-3,5,6,13,14-1,4,7,10,15-2,8,9,11,14-6,12,13,15,16\)\\
\(\varphi_{15}\):& \(-1,2,3,4,5-6,7,8,9,10-1,11,12,13,14-6,2,3,15,16-4,5,9,10,14-\)\\ &\(-7,8,12,13,16-1,6,11,14,15-2,3,9,10,16-4,5,12,13,15-7,8,11,14,16-\)\\ &\(-1,6,9,10,15-2,3,12,13,16-4,5,8,10,14-1,7,9,11,13-6,12,14,15,16-\)\\
\(\varphi_{17}\):& \(-1,2,3,4,5-6,7,8,9,10-1,11,12,13,14-6,2,3,15,16-4,5,9,10,13-\)\\ &\(-1,7,8,11,15-2,6,10,12,14-3,4,7,13,16-5,9,10,11,15-1,2,8,12,16-\)\\ &\(-3,5,6,13,14-1,4,7,10,15-3,8,9,11,16-2,5,12,14,15-1,3,6,10,13-\)\\
&\(-4,7,9,11,14-8,12,13,15,16-\)\\
\end{tabular}\]

So it remains to show that $n$ can be written as an integer linear combination
of elements of $\C$ whenever $n\geq3$ and $n\not=9$.
To see this, we consider the integer linear combinations, $8,10,11,12,13,14,15,8+8,17,8+10, 8+11, 10+10, 10+11, 11+11, 8+15, 8+8+8, 10+15$ and note that every larger integer can be written as one of the final 8, plus some multiple of 8.\par
Now assume that $n=9$.  To see that $\tau_5(C_9)\le 17$, consider the following 5-tone 17-coloring.
\begin{alignat*}{2}
\text{5-tone 17-coloring of $C_{9}$}: &-1,2,3,4,5-6,7,8,9,10-1,11,12,13,14-6,2,3,15,16-\\
&-4,5,7,9,12-1,8,10,11,15-2,4,6,13,14-3,7,8,12,16-\\
&-9,11,13,15,17-
\end{alignat*}

Finally, we will prove that $\tau_5(C_9)\geq17$. Assume, to the contrary, that
\(C_9\) has a 5-tone 16-coloring. Note that each color appears on at most 4
vertices. Each color must appear on at least one vertex, since $\tau_5(C_9)\ge
\tau_5(P_4)=16$.
For each \(i\in[4]\), let \(s_i\) denote the
number of colors used on exactly \(i\) vertices. So we have
\(\sum_{i=1}^4s_i=16\) and \(\sum_{i=1}^4 is_i=9(5)=45\). Further, let \(s_3'\)
denote the number of colors used on exactly 3 vertices, where some pair is at
distance 2, and let \(s_3''\) denote the number of colors used on exactly 3
vertices, where each pair is distance 3. Note that each color used on 4
vertices is used on 3 pairs of vertices at distance 2. Since \(C_9\) has 9
pairs of vertices at distance 2, and each pair can share at most 1 common
color, we get \(3s_4+s_3'\leq 9\). Similarly, by considering vertex pairs with
a common color that are at distance 3, we get \(s_3'+3s_3''\leq 18\).
Multiplying the first inequality by 2, adding it to the second inequality, and
dividing by 3 (recalling \(s_3'+s_3''=s_3\)) gives
\[2s_4+s_3\leq 12. \tag{\(\ast\)}\]
Recall that \(\sum_{i=1}^4s_i=16\) and \(\sum_{i=1}^4is_i=9(5)=45\).
Multiplying the first equation by 3 and subtracting the second gives
\(2s_1+s_2-s_4=3\). Adding this to \((\ast)\) gives \(2s_1+s_2+s_3+s_4\leq
12+3=15\). Since \(s_1\geq 0\), this contradicts the first equation, and this
contradiction finishes the proof. 
\end{proof}

We conclude this section with a bold conjecture.

\begin{conj}
\label{conj: strong cycle conjecture}
For each \(t\geq2\) there exists \(N\in\NN\) such that \(\tau_t(C_n)=\tau_t(P_n)\) for all \(n\geq N\).
\end{conj}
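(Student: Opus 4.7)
The plan is to combine Lemma~\ref{lem: concatenating colored small cycles to color cycles} with a structural argument on the space of valid $t$-tone $k$-colorings of infinite paths, mimicking the strategy used for $t\in\{3,4,5\}$ in Theorems~\ref{thm: 3-tone coloring cycles}--\ref{thm: 5-tone coloring cycles}. Set $k := \lim_{n\to\infty}\tau_t(P_n)$; by Lemma~\ref{prop: t-tone coloring paths} the sum defining $\tau_t(P_n)$ has only finitely many nonzero terms, so $k$ is a finite integer attained by $\tau_t(P_n)$ for all $n$ beyond some threshold $N_0=N_0(t)$. The lower bound $\tau_t(C_n)\geq k$ is immediate from Lemma~\ref{prop: t-tone lower bound from subgraphs} once $n\geq N_0$, so the conjecture reduces to proving $\tau_t(C_n)\leq k$ for all sufficiently large $n$.

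To obtain this upper bound via Lemma~\ref{lem: concatenating colored small cycles to color cycles}, I would build a finite set $\C\subseteq\ZZ_{\geq t}$ together with, for each $\ell\in\C$, a $t$-tone $k$-coloring $\vph_\ell$ of $C_\ell$ whose first $t$ vertices carry a \emph{common}, fixed labeling (for concreteness, the greedy labeling from the proof of Lemma~\ref{prop: t-tone coloring paths}). Since each $\vph_\ell$ is already a cyclic coloring of $C_\ell$, the ``wrap-around'' window consisting of the last $t$ vertices of $\vph_\ell$ followed by its first $t$ vertices is already a valid $t$-tone $k$-coloring of $P_{2t}$; combined with the common prefix, this makes property (2) of Lemma~\ref{lem: concatenating colored small cycles to color cycles} hold automatically for every ordered pair $(\ell_1,\ell_2)\in\C\times\C$. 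If $\C$ also contains two elements with $\gcd$ equal to $1$, then the Sylvester--Frobenius (``Chicken McNugget'') theorem ensures that every sufficiently large $n$ is a nonnegative integer combination of elements of $\C$, and Lemma~\ref{lem: concatenating colored small cycles to color cycles} delivers $\tau_t(C_n)\leq k$.

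The heart of the matter is thus to exhibit, for each $t\geq 2$, two coprime integers $a,b$ (both at least $t$) and $t$-tone $k$-colorings of $C_a$ and $C_b$ sharing a prescribed initial segment. My proposed attack uses symbolic dynamics. Let $X_t$ be the shift of finite type on alphabet $\binom{[k]}{t}$ whose forbidden words are the $(i+1)$-tuples $(S_0,\dots,S_i)$ with $\card{S_0\cap S_i}\geq i$, for $1\leq i\leq t$; its two-sided admissible sequences are exactly the valid $t$-tone $k$-colorings of $\ZZ$. A compactness argument (K\"onig's lemma) applied to the colorings of $P_n$ for $n\geq N_0$ shows $X_t$ is nonempty, so the transition digraph $\Gamma_t$ on valid $t$-windows contains a nontrivial strongly connected component. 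Closed walks of length $\ell$ in that component are in bijection with $t$-tone $k$-colorings of $C_\ell$, and by standard Perron--Frobenius theory, as soon as the component is aperiodic (closed-walk lengths through a fixed vertex have $\gcd$ equal to $1$), every sufficiently large $\ell$ is attained. Prescribing the starting vertex of the walk encodes the desired common prefix, yielding the coprime pair $(a,b)$.

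The principal obstacle is the aperiodicity of the strongly connected component of $\Gamma_t$. For $t\in\{3,4,5\}$ the colorings tabulated in Theorems~\ref{thm: 3-tone coloring cycles}--\ref{thm: 5-tone coloring cycles} already verify this (and much more) by exhibiting colored cycles of coprime lengths. For general $t$ I know of no short argument: one would need either to produce, for every $t$, a pair of coprime cycle lengths admitting $k$-colorings with a shared prefix---perhaps by adapting the ``optimal prefix plus periodic tail'' pattern visible in the small-$t$ proofs---or to rule out a period $p\geq 2$ of $\Gamma_t$ on structural grounds, for instance by showing that any such periodicity would contradict the flexibility of the greedy construction behind Lemma~\ref{prop: t-tone coloring paths}. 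I expect this combinatorial aperiodicity step to be the main bottleneck of the conjecture.
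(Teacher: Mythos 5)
This statement is a conjecture that the paper deliberately leaves open (it appears at the end of Section~\ref{cycles-sec} with no proof), so there is no argument of the authors to compare yours against; the relevant question is whether your proposal actually settles it, and it does not. Your reduction is sound as far as it goes and is exactly the machinery the paper uses for $t\in\{3,4,5\}$: take $k=\tau_t(P_n)$ for large $n$ (finite by Lemma~\ref{prop: t-tone coloring paths}), get the lower bound from Lemma~\ref{prop: t-tone lower bound from subgraphs}, and get the upper bound from Lemma~\ref{lem: concatenating colored small cycles to color cycles} applied to a finite set $\C$ containing coprime lengths whose colorings share a common $t$-vertex window, plus the numerical-semigroup (Sylvester--Frobenius) argument. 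But the step you defer --- exhibiting, for \emph{every} $t$, $t$-tone $k$-colorings of two cycles of coprime lengths with a shared window, equivalently aperiodicity of the relevant strongly connected component of the transition digraph $\Gamma_t$ --- is not a technical loose end; it is the entire content of the conjecture. The compactness/K\"onig argument only gives nonemptiness of the shift, hence some vertex of $\Gamma_t$ lying on closed walks; it cannot exclude a period $p\ge 2$, and Perron--Frobenius yields ``all sufficiently large lengths'' only \emph{after} aperiodicity is established. Since you explicitly concede you cannot prove this for general $t$, the proposal is a reduction plus an open problem, not a proof.

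Two secondary cautions if you pursue this route. First, your ``automatic'' verification of property (2) checks that the wrap-around window is a valid coloring of $P_{2t}$, while the lemma as stated asks for a coloring of $C_{2t}$; the paper's remark after the lemma glosses this the same way, but in a writeup you should justify carefully which version is actually needed in the lemma's induction, since the $C_{2t}$ condition imposes shorter distances across the second seam than ever arise in $C_n$. Second, insisting that the common prefix be the specific greedy labeling from the proof of Lemma~\ref{prop: t-tone coloring paths} is an extra constraint with no evident justification --- nothing guarantees that optimal cycle colorings extend that particular prefix, and the small-$t$ colorings in Theorems~\ref{thm: 3-tone coloring cycles}--\ref{thm: 5-tone coloring cycles} were found ad hoc (with ILP lower bounds for the exceptional lengths), which is precisely why the general case remains open.
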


\section{3-Tone, 4-Tone, and 5-Tone Coloring of Grid Graphs}
\label{grids-sec}


In this section we will consider the \(t\)-tone chromatic number of grid graphs for each \(t\in\{3,4,5\}\).

Cooper and Wash~\cite[Theorem 5]{cw} showed that \(\tau_2(P_n\square P_m)=6\)
for all \(n,m\geq 2\). It is useful in their proof, and in the following three
theorems, to imagine the grid graph as being drawn in the first quadrant of the
\(xy\)-plane with vertices as integer points. Now their proof 
can be viewed as coloring lines of slope 1 by cycling through the
colors 1, 2, 3 and coloring lines of slope $-1$ by cycling through the colors 4, 5, 6.

For Theorem~\ref{thm: 3-tone coloring grid graphs}, the proof can be
viewed as coloring the lines of slope 1 and slope $-1$ as above, but
also coloring lines of slope 2. This theorem improves a result in~\cite[Theorem 8]{cw}.
For Theorem~\ref{thm: 4-tone coloring grid graphs}, the proof can be viewed
as coloring the lines of slope 1, slope $-1$, and slope 2 as in
Theorem~\ref{thm: 3-tone coloring grid graphs}, but further coloring lines of slope \(-\frac{1}{2}\).
Finally, for Theorem~\ref{thm: 5-tone coloring grid graphs}, the proof can also
be viewed as coloring the lines of slope 1, slope $-1$, slope 2, and slope
\(-\frac{1}{2}\) as in Theorem~\ref{thm: 4-tone coloring grid graphs}, but adding colors to lines of slope 1.

For the following three theorems we consider the vertices of
\(P_m\square P_n\) as integer points on the \(xy\)-plane where a vertex
\((x_i,y_j)\) is denoted by \((i,j)\) with \(1\leq i\leq m\) and \(1\leq
j\leq n\). For all vertices \((i_1,j_1)\) and \((i_2,j_2)\) in
\(V(P_m\square P_n)\), note that the distance between them is exactly
\(\card{i_1-i_2}+\card{j_1-j_2}\). 
\begin{theorem}
\label{thm: 3-tone coloring grid graphs}
\(\tau_3(P_m\square P_n)=10\) for all integers \(m\) and \(n\) with \(2\leq m\leq n\).
\end{theorem}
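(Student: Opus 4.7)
My plan has a one-line lower bound and an explicit coloring for the upper bound. The lower bound is immediate: whenever $m,n\ge 2$, $P_m\square P_n$ contains $C_4$ as a subgraph, so Lemma~\ref{prop: t-tone lower bound from subgraphs} together with Lemma~\ref{prop: t-tone coloring number of C_4} gives $\tau_3(P_m\square P_n)\ge \tau_3(C_4)=4(3)-2=10$.

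For the upper bound, following the three-slope hint given just before the theorem, I would exhibit the explicit $3$-tone $10$-coloring
\[
f(i,j) := \Bigl\{\,1+((j-i)\bmod 3),\; 4+((j+i)\bmod 3),\; 7+((j-2i)\bmod 4)\,\Bigr\}.
\]
The three coordinates lie in the disjoint palettes $\{1,2,3\}$, $\{4,5,6\}$, and $\{7,8,9,10\}$, so $f(i,j)\in\binom{[10]}{3}$ for every vertex $(i,j)$. They encode, respectively, the slope-$1$ diagonal through $(i,j)$ reduced mod $3$, the slope-$(-1)$ diagonal reduced mod $3$, and the slope-$2$ line reduced mod $4$, which is exactly the direction recipe described in the introduction of this section.

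The main obstacle is verifying $|f(v)\cap f(w)|<d(v,w)$ for every pair of distinct vertices. Writing $\Delta i := i_2-i_1$ and $\Delta j := j_2-j_1$, so that $d(v,w)=|\Delta i|+|\Delta j|$, the two vertices share the first color iff $\Delta j\equiv \Delta i\pmod 3$, share the second iff $\Delta j\equiv -\Delta i\pmod 3$, and share the third iff $\Delta j\equiv 2\Delta i\pmod 4$. I would then split into cases by the number of shared colors. Sharing both mod-$3$ colors forces $\Delta i\equiv \Delta j\equiv 0\pmod 3$, already yielding distance at least $3$; sharing all three colors additionally forces $\Delta j\equiv 2\Delta i\pmod 4$, whose smallest nonzero solution with $\Delta i,\Delta j\equiv 0\pmod 3$ is $(\pm 6,0)$, giving distance $6>3$. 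For pairs sharing exactly two colors that include the mod-$4$ color, the smallest displacements are $(\pm 1,\pm 2)$ and $(\pm 1,\mp 2)$, each at distance $3$. Each single-color share is realized at distance $2$ by a displacement from $\{(\pm 1,\pm 1),(\pm 2,0)\}$, and no distance-$1$ displacement shares any color. Each sub-case reduces to inspecting a short list of small $(\Delta i,\Delta j)$ against the relevant congruences, so the bookkeeping is routine; once complete, $f$ is a $3$-tone $10$-coloring of $P_m\square P_n$ and the upper bound follows.
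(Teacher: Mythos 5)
Your proposal is correct and matches the paper's proof essentially verbatim: the same lower bound via $C_4$ with Lemmas~\ref{prop: t-tone lower bound from subgraphs} and~\ref{prop: t-tone coloring number of C_4}, and the same three-coordinate coloring (your $(j-2i)\bmod 4$ is literally the paper's $(2i+j)\bmod 4$ since $-2\equiv 2\pmod 4$, and the mod-3 coordinates differ only by a relabeling within their palettes). The congruence case analysis you outline is exactly the paper's Claims 1--3, and the key facts you assert (two shared mod-3 coordinates force both displacements $\equiv 0\pmod 3$, no distance-$1$ or distance-$2$ pair shares the mod-4 coordinate together with a mod-3 coordinate, and no adjacent pair shares any coordinate) are all correct.
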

\begin{proof}
Lemmas~\ref{prop: t-tone lower bound from subgraphs} and~\ref{prop: t-tone coloring number of C_4} imply that \(10=\tau_3(C_4)\leq\tau_3(P_m\square P_n)\). So it suffices to construct a 3-tone 10-coloring of \(P_m\square P_n\). Let \(f: V(P_m\square P_n)\rightarrow \binom{[10]}{3}\) where we write \(f((i,j))\) as \(f(i,j)\) and we let \(f(i,j):=\{f_1(i,j),f_2(i,j),f_3(i,j)\}\),  where
\begin{alignat*}{2}
f_1(i,j)&:=(i-j)\text{ mod } 3 \\
f_2(i,j)&:=((i+j)\text{ mod } 3) +3 \tag{\(\ast\)}\\
f_3(i,j)&:=((2i+j)\text{ mod } 4) +6.
\end{alignat*}

Denote \(v\) by \((i_1,j_1)\) and \(w\) by \((i_2,j_2)\). 
It suffices to prove the 
following three claims.

\noindent \underline{Claim 1}: If \(\card{f(v)\cap f(w)}=3\), then \(d(v,w)\geq 4\).

If \(\card{f(v)\cap f(w)}=3\), then \(f_i(v)=f_i(w)\) for all \(i\in[3]\). So \((i_1-j_1)\equiv (i_2-j_2)\mod 3\) and \((i_1+j_1)\equiv (i_2+j_2)\mod 3\). Thus \(i_1\equiv i_2\mod 3\) and \(j_1\equiv j_2\mod 3\). If \(d(v,w)\leq 3\) and \(v\not= w\), then \(i_1\equiv i_2 \pm 3\) and \(j_1=j_2\) or else \(i_1=i_2\) and \(j_1=j_2\pm 3\). But now \((2i_1+j_1)\not\equiv(2i_2+j_2)\mod 4\).

\noindent \underline{Claim 2}: If \(\card{f(v)\cap f(w)}=2\), then \(d(v,w)\geq 3\).

Assume \(\card{f(v)\cap f(w)}=2\). If \(\{f_1(v), f_2(v)\}=\{f_1(w),
f_2(w)\}\), then the argument in Claim 1 still holds. Instead we assume
\(f_3(v)=f_3(w)\) and \(d(v,w)\leq 2\). Thus \(i_1=i_2\pm 2\) and \(j_1=j_2\),
but now \(f_1(v)\not=f_2(v)\) and \(f_2(v)\not=f_2(w)\), a contradiction.

\noindent \underline{Claim 3}: If \(\card{f(v)\cap f(w)}=1\), then \(d(v,w)\geq 2\).

Assume that \(d(v,w)=1\). So either \(i_1=i_2\) and \(j_1-j_2=\pm 1\) or else \(j_1=j_2\) and \(i_1-i_2=\pm 1\). Now clearly \(f_i(v)\not =f_i(w)\) for all \(i\in[3]\), a contradiction.
\end{proof}

\begin{theorem}
\label{thm: 4-tone coloring grid graphs}
\(\tau_4(P_m\square P_n)=14\) for integers \(m\) and \(n\) with \(2\leq m\leq n\).
\end{theorem}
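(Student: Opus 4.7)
The plan is to mirror the proof of Theorem~\ref{thm: 3-tone coloring grid graphs}. The lower bound follows immediately: since $m,n\ge 2$, the graph $P_m\square P_n$ contains $C_4$ as a subgraph, so Lemmas~\ref{prop: t-tone lower bound from subgraphs} and~\ref{prop: t-tone coloring number of C_4} give $\tau_4(P_m\square P_n)\ge \tau_4(C_4)=4(4)-2=14$.

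For the upper bound, following the discussion preceding this theorem, I will add a fourth coordinate function (along lines of slope $-1/2$) to the construction from Theorem~\ref{thm: 3-tone coloring grid graphs}. Define $f:V(P_m\square P_n)\to \binom{[14]}{4}$ by $f(i,j):=\{f_1(i,j),f_2(i,j),f_3(i,j),f_4(i,j)\}$ with
\begin{align*}
f_1(i,j)&:=(i-j)\bmod 3,\\
f_2(i,j)&:=((i+j)\bmod 3)+3,\\
f_3(i,j)&:=((2i+j)\bmod 4)+6,\\
f_4(i,j)&:=((i+2j)\bmod 4)+10.
\end{align*}
The four coordinates take values in pairwise disjoint ranges, so $\card{f(v)\cap f(w)}$ equals the number of indices $\ell\in[4]$ with $f_\ell(v)=f_\ell(w)$.

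Writing $a:=i_1-i_2$ and $b:=j_1-j_2$ (so $d(v,w)=|a|+|b|$), the four agreement conditions become $a\equiv b\pmod 3$, $a\equiv -b\pmod 3$, $2a+b\equiv 0\pmod 4$, and $a+2b\equiv 0\pmod 4$, respectively. I will then prove, in the style of the three claims in Theorem~\ref{thm: 3-tone coloring grid graphs}, that $\card{f(v)\cap f(w)}=s$ implies $d(v,w)\ge s+1$ for each $s\in\{1,2,3,4\}$. Each claim reduces to a routine check that the indicated combination of modular conditions forces $|a|+|b|$ to be sufficiently large.

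The main obstacle is the case analysis for $s\in\{3,4\}$, where several coordinates agree at once. For $s=4$, the two mod-$3$ conditions combine to give $a,b\equiv 0\pmod 3$, and adding and subtracting the two mod-$4$ conditions gives $3(a+b)\equiv 0$ and $a-b\equiv 0\pmod 4$, forcing $a,b\equiv 0\pmod 4$; hence $|a|,|b|\in\{0\}\cup[12,\infty)$ and $d(v,w)\ge 12\ge 5$. For $s=3$ there are $\binom{4}{3}=4$ subcases, one for each omitted coordinate; in each subcase a short modular argument shows $d(v,w)\ge 4$ (the tightest instances being configurations like $(a,b)=(\pm 2,\pm 2)$, $(\pm 6,0)$, or $(0,\pm 6)$). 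The cases $s\le 2$ are then direct, and the base case $d(v,w)=1$ is handled by substituting $(a,b)\in\{(\pm 1,0),(0,\pm 1)\}$ into all four modular conditions and verifying that each one fails, so adjacent vertices receive disjoint labels.
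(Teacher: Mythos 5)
Your proposal is correct and matches the paper's proof essentially verbatim: the same $C_4$ lower bound via Lemmas~\ref{prop: t-tone lower bound from subgraphs} and~\ref{prop: t-tone coloring number of C_4}, the same four coordinate functions, and the same plan of showing $\card{f(v)\cap f(w)}=s$ forces $d(v,w)\ge s+1$ (the paper just organizes these checks by citing the claims of Theorem~\ref{thm: 3-tone coloring grid graphs} rather than redoing the modular arithmetic). One small caution: in your $s=4$ case, adding and subtracting the two mod-$4$ congruences only yields $a\equiv b\equiv 0\pmod 2$, so you should instead solve $2a+b\equiv 0$ and $a+2b\equiv 0\pmod 4$ directly (e.g.\ substitute $b\equiv -2a$) to get $a\equiv b\equiv 0\pmod 4$; the conclusion $d(v,w)\ge 5$ holds either way, since the mod-$3$ conditions already force $a$ and $b$ divisible by $3$.
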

\begin{proof}
Lemmas~\ref{prop: t-tone lower bound from subgraphs} and \ref{prop: t-tone coloring number of C_4} imply that 
\(14=\tau_4(C_4)\leq\tau_4(P_m\square P_n)\). So it suffices to construct a
4-tone 14-coloring of \(P_m\square P_n\). Let \(f: V(P_m\square P_n)\rightarrow
\binom{[14]}{4}\), where we write \(f((i,j))\) as \(f(i,j)\) and we let
\(f(i,j):=\{f_1(i,j),f_2(i,j),f_3(i,j),f_4(i,j)\}\), where
\begin{alignat*}{2}
f_1(i,j)&:=(i-j)\text{ mod } 3 \\
f_2(i,j)&:=((i+j)\text{ mod } 3) + 3 \\
f_3(i,j)&:=((2i+j)\text{ mod } 4) + 6 \tag{\(\ast\ast\)}\\
f_4(i,j)&:=((i+2j)\text{ mod } 4) + 10.
\end{alignat*}

Denote \(v\) by \((i_1,j_1)\) and \(w\) by \((i_2,j_2)\). Assume \(d(v,w)=1\).
It suffices to prove the following four claims.

\noindent\underline{Claim 1}: If \(\card{f(v)\cap f(w)}=4\), then \(d(v,w)\geq 5\). 

Assume \(\card{f(v)\cap f(w)}=4\). So \(f_i(v)=f_i(w)\) for all \(i\in[4]\).
Claim 1 in Theorem~\ref{thm: 3-tone coloring grid graphs} implies \(d(v,w)\geq
4\). Suppose \(d(v,w)=4\). Since \(f_4(v)=f_4(w)\) we have \(i_1-i_2\equiv
0\mod 4\) and \(j_1=j_2\), or \(j_1-j_2\equiv 0\mod 4\) and \(i_1=i_2\). In
either case this implies \(f_k(v)\not=f_k(w)\) for each \(k\in\{1,2\}\), a
contradiction.

\noindent\underline{Claim 2}: If \(\card{f(v)\cap f(w)}=3\), then \(d(v,w)\geq 4\). 

Assume \(\card{f(v)\cap f(w)}=3\). Claim 1 in Theorem~\ref{thm: 3-tone coloring
grid graphs} implies \(f_4(v)=f_4(w)\); and Claim 2 in Theorem~\ref{thm: 3-tone
coloring grid graphs} implies \(d(v,w)\geq 3\). Suppose \(d(v,w)=3\). If
\(f_3(v)\not=f_3(w)\), then \(i_1\equiv i_2\mod 3\) and \(j_1\equiv j_2\mod
3\), but then \(f_4(v)\not=f_4(w)\), a contradiction. 
If \(f_3(v)=f_3(w)\), then \(i_1-i_2\equiv
j_1-j_2 \mod4\), which implies \(f_1(v)\not=f_1(w)\) and \(f_2(v)\not=f_2(w)\),
contradicting \(\card{f(v)\cap f(w)}=3\).

\noindent\underline{Claim 3}: If \(\card{f(v)\cap f(w)}=2\), then \(d(v,w)\geq 3\).

Assume \(\card{f(v)\cap f(w)}=2\). 
If $f_4(v)\ne f_4(w)$,
then by Claim 2 in Theorem~\ref{thm: 3-tone coloring
grid graphs} we know \(d(v,w)\geq 3\). So we may assume \(f_4(v)=f_4(w)\) and
\(f_k(v)=f_k(w)\) for some single $k\in[3]$. From Claim 3 in
Theorem~\ref{thm: 3-tone coloring grid graphs} we have that \(d(v,w)\geq 2\).
Suppose \(d(v,w)=2\). Since \(f_4(v)=f_4(w)\) it must be that \(i_1=i_2\). So
\(j_1-j_2\equiv 2\mod 4\); but now \(f_k(v)\not=f_k(w)\) for
all \(k\in\{1,2\}\), a contradiction.

\noindent\underline{Claim 4}: If \(\card{f(v)\cap f(w)}=1\), then \(d(v,w)\geq 2\).

Assume \(\card{f(v)\cap f(w)}=1\). If \(f_4(v)\not=f_4(w)\), then Claim 3 in
Theorem~\ref{thm: 3-tone coloring grid graphs} implies \(d(v,w)\geq 2\). So
\(f_4(v)=f_4(w)\), which implies \(d(v,w)\geq 2\).
\end{proof}
 
\begin{theorem}
\label{thm: 5-tone coloring grid graphs}
\(20\leq\tau_5(P_m\square P_n)\leq 22\) for all \(2\leq m< n\).
\end{theorem}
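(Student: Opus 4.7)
The theorem has two parts; we prove them separately, each extending techniques from the previous two theorems.

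For the upper bound $\tau_5(P_m\square P_n)\le 22$, we extend the construction of Theorem~\ref{thm: 4-tone coloring grid graphs} by adjoining a fifth coordinate function drawing from $8$ new colors in $\{14,\ldots,21\}$; a good choice is
\[f_5(i,j) := ((i+3j)\bmod 8)+14,\]
so that $f(i,j):=\{f_1(i,j),\ldots,f_5(i,j)\}$ assigns a $5$-subset of $[22]$ to each vertex. The verification consists of five claims in the style of the previous two theorems: for each $s\in\{1,\ldots,5\}$, if $|f(v)\cap f(w)|=s$ then $d(v,w)\ge s+1$. When the $s$ agreeing coordinate functions all lie within $\{f_1,\ldots,f_4\}$, the corresponding claim from Theorem~\ref{thm: 4-tone coloring grid graphs} gives the conclusion directly. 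When $f_5$ is among them, the added relation $(i_1-i_2)+3(j_1-j_2)\equiv 0\pmod 8$, combined with the relations forced by the other agreements, yields $d(v,w)\ge s+1$ via short modular arithmetic. The coefficients $(a,b)=(1,3)$ in $f_5=(ai+bj)\bmod 8$ are chosen so that $f_5$ differs on every shift $(p,q)\in\mathbb{Z}^2$ with $|p|+|q|\in\{2,3\}$ at which one of $f_1,\ldots,f_4$ already agrees; this amounts to ten inequalities $ap+bq\not\equiv 0\pmod 8$, all of which hold for $(1,3)$.

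For the lower bound $\tau_5(P_m\square P_n)\ge 20$, we show $\tau_5(P_2\square P_3)\ge 20$; since $P_2\square P_3$ is a subgraph of every $P_m\square P_n$ with $m\ge 2$ and $n\ge 3$, Lemma~\ref{prop: t-tone lower bound from subgraphs} then gives the bound for the entire range of the theorem. Assume for contradiction that $P_2\square P_3$ admits a $5$-tone $19$-coloring $f$. Let $u=(1,2)$ and $v=(2,2)$, with the other neighbors $u_1=(1,1),u_2=(1,3)$ and $v_1=(2,1),v_2=(2,3)$, respectively, and set
\[X := f(u)\cup f(u_1)\cup f(u_2)\cup f(v),\qquad Y := f(v)\cup f(v_1)\cup f(v_2)\cup f(u).\]
Since $u_1,u_2,v$ are pairwise at distance $2$ and all adjacent to $u$, inclusion-exclusion gives $|X|\ge 5+(15-3)=17$, and symmetrically $|Y|\ge 17$, so $|X\cap Y|\ge 17+17-19=15$. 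On the other hand, $f(u)\cup f(v)\subseteq X\cap Y$ accounts for exactly $10$ colors, and every additional color of $X\cap Y$ lies in $(f(u_1)\cup f(u_2))\cap(f(v_1)\cup f(v_2))$. Using the distances $d(u_1,v_1)=d(u_2,v_2)=1$ and $d(u_1,v_2)=d(u_2,v_1)=3$, that intersection contains at most $0+2+2+0=4$ colors, giving $|X\cap Y|\le 14$, a contradiction.

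The hard part will be the bookkeeping for the upper bound. The creative step is selecting the coefficients of $f_5$, since naive choices such as $f_5=(i-j)\bmod 8$ (which would have $f_1$ and $f_5$ both agree on the distance-$2$ shift $(1,1)$, giving share $2$) fail one of the ten modular inequalities; once suitable coefficients are fixed, the case analysis reduces to routine arithmetic modulo $8$. The lower-bound argument is tight for this particular $X,Y$ construction, so closing the gap $20\le\tau_5(P_m\square P_n)\le 22$ would require substantially different ideas.
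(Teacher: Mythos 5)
Your proposal is correct, and your upper bound is essentially the paper's own argument: the identical fifth coordinate \(f_5(i,j)=((i+3j)\bmod 8)+14\) appended to the 4-tone coloring of Theorem~\ref{thm: 4-tone coloring grid graphs}, verified by citing that theorem's claims when \(f_5(v)\neq f_5(w)\) and doing arithmetic mod \(8\) when \(f_5(v)=f_5(w)\). One caution on completeness: your ``ten inequalities'' criterion (shifts of \(\ell_1\)-norm \(2\) or \(3\)) only settles the cases \(\card{f(v)\cap f(w)}\le 3\); for intersections of size \(4\) and \(5\) you must still examine the norm-\(4\) shifts on which \(f_5\) \emph{does} agree, namely \((\pm2,\pm2)\), \((\pm3,\mp1)\), \((\pm1,\mp3)\) (there both \(f_2\) and \(f_3\) change, so at most three coordinates can agree), and check that \(f_5\) changes on every norm-\(5\) shift; these are exactly the paper's Claims 1 and 2, the arithmetic does go through, and your blanket assertion covers it, so this is a matter of spelling out rather than a gap. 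Your lower bound takes a mildly different route: the paper makes one inclusion--exclusion pass over all six \(5\)-sets of \(P_2\square P_3\) (six distance-\(2\) pairs share at most one color, two distance-\(3\) pairs at most two), giving \(\tau_t(P_2\square P_3)\ge 6t-10=20\) for all \(t\ge 5\), while you refute a \(5\)-tone \(19\)-coloring by comparing the unions \(X\) and \(Y\) built around the two degree-\(3\) vertices. Your counting is valid --- the distances are as you state, \(\card{X},\card{Y}\ge 17\) forces \(\card{X\cap Y}\ge 15\), while \(\card{X\cap Y}\le 10+0+2+2+0=14\) --- and combined with Lemma~\ref{prop: t-tone lower bound from subgraphs} it covers the whole range \(2\le m<n\); the paper's version is a bit more compact and yields the general value \(6t-10\), but nothing essential differs.
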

\begin{proof}
Using Lemma~\ref{prop: t-tone coloring number of C_4}  when \(t\geq 5\) implies
\(\tau_t(P_2\square P_3)=6t-10\); in fact, an optimal \(t\)-tone coloring
\(\varphi\) of \(P_2\square P_3\) is unique up to relabelling. This fact
combined with Lemma~\ref{prop: t-tone lower bound from subgraphs} implies \(20=\tau_5(P_2\square P_3)\leq\tau_5(P_m\square P_n)\).\par
It now suffices to show a 5-tone 22-coloring of \(P_m\square P_n\). Let \(f: V(P_m\square P_n)\rightarrow \binom{[22]}{5}\) where we will denote \(f((i,j))\) as \(f(i,j)\) and define \(f(i,j):=\{f_1(i,j),f_2(i,j),f_3(i,j),f_4(i,j), f_5(i,j)\}\) where 
\begin{alignat*}{2}
f_1(i,j)&:=(i-j)\text{ mod } 3 \\
f_2(i,j)&:=((i+j)\text{ mod } 3) + 3 \\
f_3(i,j)&:=((2i+j)\text{ mod } 4) + 6\\
f_4(i,j)&:=((i+2j)\text{ mod } 4) + 10\\
f_5(i,j)&:=((i+3j) \text{ mod } 8) + 14.
\end{alignat*}
Let \(v=(i_1,j_1)\), \(w=(i_2,j_2)\), and \(q=\card{f(v)\cap f(w)}\). If
\(q\in\{0,\dots,4\}\) and \(f_5(v)\not=f_5(v)\), then \((\ast\ast)\) and the
claims in Theorem~\ref{thm: 4-tone coloring grid graphs} imply \(d(v,w)\geq
q+1\). So we assume \(f_5(v)=f_5(v)\). This implies \(d(v,w)\geq 4\) since
otherwise \(((i_1-i_2)+3(j_1-j_2))\text{ mod }8\not = 0\). So it suffices to
prove the following three claims.

\noindent\underline{Claim 1}: If \(\card{f(v)\cap f(w)}=4\), then \(d(v,w)\geq 5\). 

Assume \(\card{f(v)\cap f(w)}=4\). Suppose \(d(v,w)=4\). Since
\(f_5(v)=f_5(w)\), either: \(i_1-i_2=\pm1\) and \(j_1-j_2=\mp 3\); or
\(i_1-i_2=\pm2\) and \(j_1-j_2=\pm2\); or  \(i_1-i_2=\pm3\) and
\(j_1-j_2=\mp1\). In all cases \(f_2(v)\not=f_2(w)\) and \(f_3(v)\not=f_3(w)\),
a contradiction to \(\card{f(v)\cap f(w)}=4\).

\noindent\underline{Claim 2}: If \(\card{f(v)\cap f(w)}=5\), then \(d(v,w)\geq 6\). 

Assume \(\card{f(v)\cap f(w)}=5\). Claim 1 implies \(d(v,w)\geq 5\). 
So $|i_1-i_2|+|j_1-j_2|=5$.  But now $f_5(v)\ne f_5(w)$, a contradiction.
\end{proof}


\bibliographystyle{amsplain} 

\begin{thebibliography}{2} 

\bibitem {bbdf} Deepak Bal, Patrick Bennett, Andrzej Dudek, Alan Frieze. The
\(t\)-tone chromatic number of random graphs. \textit{Graphs Combin.}, 30(5):
1073--1086, 2014. \url{https://arxiv.org/abs/1210.0635}

\bibitem {bkpy} J\'{o}zsef Balogh, Martin Kochol,  Andr\'{a}s Pluh\'{a}r, and
Xingxing Yu. Covering planar graphs with forests. \textit{J. Combin. Theory Ser.
B}, 94(1): 147--158, 2005.

\bibitem {bi2} Allan Bickle. Properties of Sierpinski Triangle Graphs. Accepted
by \textit{Congr. Num.} 2021. \url{https://allanbickle.files.wordpress.com/2016/05/sierpinskigraph.pdf}

\bibitem {bi} Allan Bickle. \(2\)-tone coloring of joins and products of graphs.
\textit{Congr. Numer.}, 217: 171--190, 2013.

\bibitem {bp}  Allan Bickle, Ben Phillips. \(t\)-tone colorings of graphs.
\textit{Util. Math.}, 106: 85--102, 2018. 

\bibitem {cw} Catherine Cooper, Kirsti Wash. \(t\)-tone colorings in the
Cartesian product. \textit{Congr. Numer.}, 228:199--210, 2017.

\bibitem {ckk} Daniel W. Cranston, Jaehoon Kim, and William B. Kinnersley. New
results in \(t\)-tone coloring of graphs. \textit{Electronic Journal of Combinatorics}, 20, P17, 2013. \url{https://arxiv.org/abs/1108.4751}

\bibitem {danw} Daniel W. Cranston, Douglas B. West. An introduction to the Discharging Method via Graph Coloring. 
Discrete Math. Vol. 340(4), April 2017, pp. 766--793.  \url{https://arxiv.org/abs/1306.4434}

\bibitem {d} Jiuying Dong. 2-tone coloring of graphs with maximum degree 4.
\textit{Util. Math.}, 107: 19--20, 2018.

\bibitem {d2} Jiuying Dong. An Upper Bound for the 3-tone Chromatic Number of
Graphs with Maximum Degree 3. \textit{Graphs and Combinatorics}, 38: 159, 2022. \url{https://doi.org/10.1007/s00373-022-02565-0} 

\bibitem {f} Igor Fabrici. Light graphs in families of outerplanar graphs.
\textit{Journal of Discrete Mathematics}, Vol. 307(7--8): 866--872, 2007.

\bibitem {fgps} Nicole Fonger, Josh Goss, Ben Phillips, and Chris Segroves. Math 6450: Final Report. 2009. \url{http://homepages.wmich.edu/~zhang/finalReport2.pdf}

\bibitem {lmmw} Jennifer Loe, Danielle Middelbrooks, Ashley Morris, Kirsti Wash.
2-tone Colorings In Graph Products. \textit{Discussions Mathematicae Graph
Theory}, 35:55--72, 2015. \url{https://doi.org/10.7151/dmgt.1773}

 
\bibitem {pt} Jun-Jie Pan, Cheng-Hsiu Tsai. A Lower Bound for the \(t\)-Tone
Chromatic Number of a Graph in Terms of Weiner Index. \textit{Graphs Combin.},
34(1):159--162, 2018. \url{https://doi.org/10.1007/s00373-017-1863-7}

\bibitem {west} Douglas B. West.
t-Tone Colorings (2011).
Accessed 1 December 2022.
\url{https://faculty.math.illinois.edu/~west/regs/ttone.html}

\bibitem {w} Jing-Ru Wu. \(t\)-Tone Chromatic Numbers of Cycles of Length Less
than Eight. Master's Thesis, Fu Jen Catholic University, New Taipei City, Taiwan, 2015.

\bibitem {y} Jyun-Yan Yang. The study on the \(t\)-tone chromatic number of
some graphs. Master's Thesis, National Chung Hsing University, South District, Taichung, Taiwan 2018.

\end{thebibliography}
\end{document}